\newcommand{\Q}{{\mathbb Q}}
\newcommand{\R}{{\mathbb R}}
\newcommand{\Z}{{\mathbb Z}}
\newcommand{\N}{{\mathbb N}}
\newcommand{\C}{{\mathbb C}}
\newcommand{\Hom}{{\rm Hom}}
\newcommand{\End}{{\rm End}}
\newcommand{\Tr}{{\rm Trace~}}
\newcommand{\dt}{{\rm det~}}
\newtheorem{theorem}{Theorem}[section]
\newtheorem{corollary}{Corollary}[section]
\newtheorem{lemma}[theorem]{Lemma}
\newtheorem{proposition}[theorem]{Proposition}
\theoremstyle{definition}
\newtheorem{definition}[theorem]{Definition}
\newtheorem{remark}{Remark}[section]
\numberwithin{equation}{section}
\title[Linear dependence of discrete series multiplicities]{A finite Linear Dependence of Discrete Series Multiplicities}
\author{Kaustabh Mondal and Gunja Sachdeva}
\date{\today}
\subjclass[2020]{22E40, 22E46, 11F11}
\keywords{Lattices in Lie groups, Discrete Series Representations, Harish-Chandra parameters, Generating Functions of String, Dimensions of Holomorphic Cusp Forms}
\address{Indian Institute of Science Education and Research, Pune, Dr.\,Homi Bhabha Road, Pashan, Pune 411008,  INDIA.}
\email{kaustabh.mondal@students.iiserpune.ac.in}
\address{BITS Pilani, K.K. BIRLA GOA Campus, NH 17B, Bypass Road, Zuarinagar, Sancoale, Goa 403726, India.}
\email{gunjas@goa.bits-pilani.ac.in}
\begin{document}
\begin{abstract}
    Let $G$ be the real points of a connected semisimple simply connected algebraic group over $\Q$ such that $G$ has a compact Cartan subgroup and let $\Gamma$ be a uniform lattice in $G$. Let $\widehat{G}_d$ denote the set of equivalence classes of unitary discrete series representations of $G$. We prove that for any finite subset of $\widehat{G}_d$ satisfying a certain condition, the associated finite set of discrete series multiplicities in $L^2(\Gamma \backslash G)$ determines all discrete series multiplicities in $L^2(\Gamma \backslash G)$. This allows us to obtain a refinement of the strong multiplicity one result for discrete series representations. As an application, we deduce that for two given levels, the equality of the dimensions of the spaces of cusp forms over a suitable finite set of weights implies the equality of the dimensions of the spaces of cusp forms for all weights.
\end{abstract}

\maketitle
\section{Introduction}
\subsection{Discrete Series Multiplicities}
Let $G$ be a semisimple connected Lie group whose complexification $G_{\C}$ is simply connected. We assume that $G$ contains a compact Cartan subgroup, i.e. $\text{rank} (G) = \text{rank} (K)$, where $K$ is a maximal compact subgroup in $G$. Consider $\Gamma$ to be a lattice in $G$ such that $\text{vol} (\Gamma \backslash G) < \infty$ with respect to the unique $G$-invariant Haar measure on $\Gamma \backslash G$. We have the right regular representation $R_{\Gamma}$ of $G$ on 
\begin{equation}
L^2(\Gamma \backslash G):= \{ \phi : G \rightarrow \C\ |\ \phi (\gamma x) =\phi (x)\ \text{for all}\ \gamma \in \Gamma, x \in G\ \text{and}\ \int\limits_{\Gamma \backslash G} |\phi(x)|^2 dx < \infty\}.
\end{equation}

\noindent Let $L^2_{0}(\Gamma \backslash G)$ be the maximal semisimple subrepresentation of $L^2(\Gamma \backslash G)$. Then

$$ L^2_{0}(\Gamma \backslash G)\cong \widehat{\bigoplus\limits_{\pi \in \widehat{G}}} m(\pi , \Gamma) \pi, $$

\noindent where $\widehat{G}$ is the set of equivalence classes of unitary irreducible representations of $G$, and $$m(\pi, \Gamma)=\text{dim}\ \Hom_G(\pi, L^2_{0}(\Gamma \backslash G)).$$

\noindent Since $G$ has compact Cartan subgroups, it possesses the discrete series representations. It is well-known that discrete series representations are parametrized by the $\textit{Harish-Chandra}$ parameters $\lambda$, and denoted by $\pi_{\lambda}$ (see Subsec. \ref{basicsetup}). The purpose of this article is to present a linear dependence among the multiplicities $m(\pi_{\lambda}, \Gamma)$ of the discrete series representations $\pi_{\lambda}$ of $G$. The multiplicity $m(\pi_{\lambda}, \Gamma)$ depends on $\text{vol} (\Gamma \backslash G)$. For example, if $\Gamma$ is a uniform torsion-free lattice in $G$ then the multiplicity $m(\pi_{\lambda}, \Gamma)$ is equal to $d_{\pi} \text{vol} (\Gamma \backslash G)$, where $d_{\pi}$ is the formal degree of $\pi$ (see \cite{kn} for the definition). Moreover, if $\Gamma$ contains nontrivial elliptic elements (i.e. torsion elements) then the $\Gamma$-conjugacy classes of these elliptic elements also contribute to $m(\pi_{\lambda}, \Gamma)$.

\vspace{5pt}

 Let us fix a maximal compact subgroup $K$ in $G$. By the assumption on $G$, there exists a compact Cartan subgroup $T \subset K$. Let $\mathfrak{g}_0, \mathfrak{k}_0, \mathfrak{t}_0$ be the Lie algebras of $G, K, T$ respectively, and  $\mathfrak{g}, \mathfrak{k}, \mathfrak{t}$ be their complexifications respectively. Let $\Phi=\Phi(\mathfrak{g}, \mathfrak{t})$ and $\Phi_c=\Phi(\mathfrak{k}, \mathfrak{t})$ be the root systems of $(G, T)$ and $(K, T)$ respectively such that $\Phi_c \subset \Phi$. Let $\Phi^+$ be a fixed but arbitrary choice of a positive system of roots in $\Phi$. A $\textit{Harish-Chandra}$ parameter $\lambda$ is an integral regular linear form in the complex dual space $\mathfrak{t}^*$ of $\mathfrak{t}$. Let $\mathcal{D}(G)$ denote the set of all Harish-Chandra parameters in $G$ (see Subsec. \ref{D^+(G)} for detailed definition). Every element $\lambda \in \mathcal{D}(G)$ determines a unique positive system of roots $\{ \alpha \in \Phi: \langle \lambda , \alpha \rangle > 0 \}$ which is denoted by $P^{\lambda}$. Any discrete series representation is uniquely determined by such a Harish-Chandra parameter $\lambda$ up to $W_c$-conjugate (see Thm. \ref{discreteseries} for details). Here, $W_c$ denotes the Weyl group of the root system $\Phi_c$.  
 
 \begin{definition}
\textit{Assume $\lambda_1, \lambda_2 \in \mathcal{D}(G)$ are given such that $P^{\lambda_1}=P^{\lambda_2}$. The infinite set $\mathcal{S}(\lambda_1, \lambda_2):= \{ \lambda_1 + k \lambda_2 \in \mathfrak{t}^* : k \in \N \cup \{0\} \}$ of integral linear forms is called a string with base $\lambda_1$ and direction $\lambda_2$. For each $k \in \N \cup \{0\}$, we denote $\lambda_1 +k \lambda_2$ by $\lambda_k$.} 
 \end{definition}
\noindent Note that every linear form $\lambda_k \in \mathcal{S}(\lambda_1, \lambda_2)$ is regular and therefore belongs to $\mathcal{D}(G)$. The discrete series representation determined by $\lambda_k \in \mathcal{D}(G)$ is denoted by $\pi_{\lambda_k}$. Each $\lambda_k$ determines the same positive system of roots i.e. $P^{\lambda_k}=P^{\lambda_1}=P^{\lambda_2}$. 

\vspace{5pt}

We need to impose a mild condition on $\lambda \in \mathcal{D}(G)$. We denote $$\mathcal{D}^{\star}(G) := \{ \lambda \in \mathcal{D}(G) : \langle \lambda-\delta^{\lambda} , \alpha \rangle > 0\ \text{for all}\ \alpha \in P^{\lambda} \cap \Phi_n\},$$

\noindent where $\Phi_n = \Phi \setminus \Phi_c$ and $\delta^{\lambda} : = \frac{1}{2} \sum\limits_{\alpha \in P^{\lambda}} \alpha $. From Lem. \ref{lemma2.4.3}, it can be seen that if the base or the direction of a string belongs to $\mathcal{D}^{\star}(G)$ then every linear form in the string belongs to $\mathcal{D}^{\star}(G)$.

\hspace{5pt}

 Let $G_e$ denote the set $\bigcup\limits_{g \in G} g K g^{-1}$ of elliptic elements in $G$. Note that $G_e$ does not depend on the choice of maximal compact subgroup $K$. The set $\Gamma \cap G_e$ of elliptic elements in $\Gamma$ is a union of \textit{finitely many} $\Gamma$-conjugacy classes i.e. $\Gamma \cap G_e = \bigcup\limits_i \ [y_i]$ for some choice of representatives $y_i \in \Gamma \cap G_e$. Finally, we define the maximal order of elliptic elements in $\Gamma$ to be the least common multiple of the orders of the elements $\{y_i \}_i$. Let us denote it by $N_{\Gamma}$. It is an invariant of $\Gamma$.

 \vspace{10pt}

\noindent Now, we are ready to display our first main result. 
\begin{theorem}\label{thm1}
\textit{Let $G$ be a semisimple connected Lie group whose complexification $G_{\C}$ is simply connected. Let $K$ be a maximal compact subgroup in $G$. Assume that $G$ contains a compact Cartan subgroup. Let $\Gamma$ be a uniform lattice in $G$, and $N_{\Gamma}$ be the maximal order of elliptic elements in $\Gamma$. For two Harish-Chandra parameters $\lambda_1$ and $\lambda_2$ with $P^{\lambda_1}=P^{\lambda_2}$, consider the string $\mathcal{S}(\lambda_1, \lambda_2)$ such that the base or the direction belongs to $\mathcal{D}^{\star}(G)$. If a finite set $\mathcal{A} \subset \N \cup \{0\}$ satisfies $| \mathcal{A} \cap (j+N_{\Gamma} \Z)| \geq |\Phi^+| +1$ for all $0 \leq j \leq N_{\Gamma}-1$, then for any $\lambda_{\ell} \in \mathcal{S}(\lambda_1, \lambda_2)$, the discrete series multiplicity $m(\pi_{\lambda_{\ell}} , \Gamma) = \sum\limits_{k \in \mathcal{A}} n(\ell, k)\ m(\pi_{\lambda_{k}} , \Gamma)$ where $n(\ell, k)$ are integers and independent of $\Gamma$.}
\end{theorem}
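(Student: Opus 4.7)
The plan is to show that $k \mapsto m(\pi_{\lambda_k}, \Gamma)$ is a quasi-polynomial of quasi-period dividing $N_\Gamma$ whose polynomial pieces have degree at most $|\Phi^+|$, and then to extract the claimed linear dependence by polynomial interpolation within each residue class modulo $N_\Gamma$.

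I would begin from the standard multiplicity formula for discrete series on cocompact $\Gamma$, obtained by pairing a pseudo-coefficient of $\pi_{\lambda_k}$ against the Selberg trace formula. It decomposes as
\[
m(\pi_{\lambda_k}, \Gamma) \;=\; d_{\pi_{\lambda_k}}\, \text{vol}(\Gamma \backslash G) \;+\; \sum_{[y] \neq [1]} c(y, \Gamma)\, \chi(\pi_{\lambda_k}, y),
\]
where $[y]$ runs over the nontrivial $\Gamma$-conjugacy classes in $\Gamma \cap G_e$, the factor $c(y, \Gamma)$ (built from $\text{vol}(\Gamma_y \backslash G_y)$ and combinatorial data) depends on $\Gamma$ but not on $\lambda_k$, and $\chi(\pi_{\lambda_k}, y)$ is a character-type spectral quantity. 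The assumption $\lambda \in \mathcal{D}^{\star}(G)$, which is preserved along the string by Lemma \ref{lemma2.4.3}, ensures that no non-tempered or limit-of-discrete-series representations corrupt this formula.

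The central computation is the $k$-dependence of each term. The formal degree $d_{\pi_{\lambda_k}}$ is proportional to $\prod_{\alpha \in P^{\lambda_1}} \langle \lambda_k, \alpha\rangle$, so it is a polynomial in $k$ of degree exactly $|\Phi^+|$. For a nontrivial regular elliptic element $y \in T$ with $y = \exp(H)$, Harish-Chandra's formula gives
\[
\chi(\pi_{\lambda_k}, y) \;=\; \frac{(-1)^{q(G)}}{\Delta^{P^{\lambda_1}}(y)} \sum_{w \in W_c} \epsilon(w)\, e^{w\lambda_1(H)} \bigl(e^{w\lambda_2(H)}\bigr)^{k},
\]
whose denominator is independent of $k$ and whose numerator is an explicit $\C$-linear combination of $k$-th powers of the scalars $e^{w\lambda_2(H)}$; since $y^{N_\Gamma}=1$ and $\lambda_2$ is integral (as $G_{\C}$ is simply connected), each such scalar is an $N_\Gamma$-th root of unity. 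Singular elliptic contributions come from the limiting version of this formula, which introduces extra polynomial factors in $k$ of degree strictly less than $|\Phi^+|$ multiplied by $N_\Gamma$-th roots of unity. Aggregating the pieces,
\[
m(\pi_{\lambda_k}, \Gamma) \;=\; \sum_{\tau \in \mu_{N_\Gamma}} P_\tau(k)\, \tau^{k}, \qquad \deg P_\tau \leq |\Phi^+|.
\]

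Restricting $k$ to the arithmetic progression $j + N_\Gamma \Z$ and using $\tau^{N_\Gamma}=1$ collapses this expression to a single polynomial $Q_j(k)$ of degree at most $|\Phi^+|$. The hypothesis $|\mathcal{A} \cap (j + N_\Gamma\Z)| \geq |\Phi^+|+1$ then supplies enough sample values to reconstruct $Q_j$ on the full progression, and hence $m(\pi_{\lambda_\ell}, \Gamma)$ for every $\ell \equiv j \pmod{N_\Gamma}$, by a discrete interpolation. The interpolation coefficients $n(\ell, k)$ depend only on $\ell$, $k$, $\mathcal{A}$, and $N_\Gamma$, and not on $\Gamma$; their integrality may be obtained by combining the $\Z$-valued nature of the sequence $\{m(\pi_{\lambda_k}, \Gamma)\}_k$ with the integer-coefficient recurrence whose characteristic polynomial is $(x^{N_\Gamma} - 1)^{|\Phi^+|+1}$, allowing a Newton-type reconstruction in $\Z$ on each residue class. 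The main technical obstacle is the uniform polynomial-degree bound $|\Phi^+|$ across every elliptic contribution, especially at singular elliptic elements where the character must be evaluated in a limiting sense and the vanishing of the Weyl denominator must be tracked carefully.
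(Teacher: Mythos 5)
Your proposal is correct in substance and, at its core, is the same argument as the paper's: both reduce the theorem to the statement that $k \mapsto m(\pi_{\lambda_k},\Gamma)$ is a quasi-polynomial of quasi-period $N_\Gamma$ with degree at most $|\Phi^+|$ on each residue class modulo $N_\Gamma$, and then interpolate within each class. The paper packages the quasi-polynomiality as the rationality of the generating function $\sum_k m(\pi_{\lambda_k},\Gamma)z^{k-1}=p(z)/(1-z^{N_\Gamma})^{|\Phi^+|+1}$ with $\deg p<N_\Gamma(|\Phi^+|+1)$ (Prop.~\ref{propgen}), which is equivalent to your closed form $\sum_\tau P_\tau(k)\tau^k$. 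Where you genuinely differ is in how the elliptic-conjugacy-class expansion of the multiplicity is obtained: you pair a pseudo-coefficient against the Selberg trace formula, whereas the paper combines Williams' identification of $m(\pi_\lambda,\Gamma)$ with a Lefschetz number (Thm.~\ref{thm4}, which is exactly where $\lambda\in\mathcal{D}^{\star}(G)$ enters) with the Hotta--Parthasarathy fixed-point evaluation of that Lefschetz number (Thm.~\ref{thm5}). The two routes land on the same formula, but yours carries the additional (standard yet nontrivial) burden of showing that the nontempered part of $L^2(\Gamma\backslash G)$ contributes nothing to the pseudo-coefficient trace; you assert this rather than prove it, and whether $\mathcal{D}^{\star}(G)$ is precisely the right hypothesis for that vanishing would need checking, while the paper's hypothesis is tailored to Williams' theorem. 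One small imprecision: a nontrivial central (hence singular) elliptic element has $\Phi_y=\Phi$, so its polynomial factor has degree exactly $|\Phi^+|$, not strictly less; this does not affect the bound $\deg P_\tau\le|\Phi^+|$ that you actually use.

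On the integrality of the $n(\ell,k)$: Lagrange interpolation of a degree-$|\Phi^+|$ polynomial at $|\Phi^+|+1$ arbitrary points of a residue class produces rational, generally non-integral, coefficients (for $|\Phi^+|=1$ and samples at positions $0$ and $2$ of a class, the value at position $1$ is the average of the two samples). Your appeal to the integer-coefficient recurrence with characteristic polynomial $(x^{N_\Gamma}-1)^{|\Phi^+|+1}$ would yield integrality only if $\mathcal{A}$ contained $|\Phi^+|+2$ consecutive members of each class, which the hypothesis does not guarantee. The paper's own proof has the same gap: it asserts without argument that the matrix $\bigl\{\binom{m_i-h+|\Phi^+|}{|\Phi^+|}\bigr\}$ has an integer inverse. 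So you are no worse off than the paper here; what both arguments actually deliver is a linear dependence with $\Gamma$-independent rational coefficients, which suffices for all the stated corollaries.
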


\noindent Since the integers $n(\ell, k)$ in the above linear expression do not depend on $\Gamma$, we have the following corollary: 

\begin{corollary}\label{cor1}
\textit{Assume the hypothesis on $G$ and $K$ as in Thm. \ref{thm1}. Let $\Gamma_1$ and $\Gamma_2$ be two uniform lattices in $G$, and $N= \text{l.c.m.}\ ( N_{\Gamma_1} , N_{\Gamma_2})$. For two Harish-Chandra parameters $\lambda_1$ and $\lambda_2$ with $P^{\lambda_1}=P^{\lambda_2}$, consider the string $\mathcal{S}(\lambda_1, \lambda_2)$ such that the base or the direction belongs to $\mathcal{D}^{\star}(G)$. Let us choose a finite set $\mathcal{A} \subset \N \cup \{0\}$ which satisfies $| \mathcal{A} \cap ( j + N \Z )| \geq |\Phi^+| +1$ for all $ 0 \leq j \leq N-1$. If $m(\pi_{\lambda_k}, \Gamma_1)= m(\pi_{\lambda_k}, \Gamma_2)$ for all $k \in \mathcal{A}$, then $m(\pi_{\lambda_k}, \Gamma_1)= m(\pi_{\lambda_k}, \Gamma_2)$ for all $\lambda_k \in \mathcal{S}(\lambda_1, \lambda_2)$.} 
\end{corollary}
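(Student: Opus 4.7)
The corollary should follow directly from Theorem \ref{thm1} applied to $\Gamma_1$ and $\Gamma_2$ separately, with the crucial observation that the coefficients $n(\ell,k)$ produced by that theorem are independent of $\Gamma$. The main preliminary step is therefore to check that the set $\mathcal{A}$, which is assumed to satisfy the divisibility-density condition with modulus $N = \mathrm{lcm}(N_{\Gamma_1}, N_{\Gamma_2})$, also satisfies the corresponding condition with modulus $N_{\Gamma_1}$ and with modulus $N_{\Gamma_2}$ individually.

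First I would verify this compatibility. Since $N_{\Gamma_1} \mid N$, for any residue $j' \in \{0,1,\dots,N_{\Gamma_1}-1\}$ one can choose $j \in \{0,1,\dots,N-1\}$ with $j \equiv j' \pmod{N_{\Gamma_1}}$, and then the inclusion $j + N\Z \subseteq j' + N_{\Gamma_1}\Z$ immediately gives
\[
|\mathcal{A} \cap (j' + N_{\Gamma_1}\Z)| \;\geq\; |\mathcal{A} \cap (j + N\Z)| \;\geq\; |\Phi^+|+1.
\]
The analogous bound holds for $N_{\Gamma_2}$. Hence $\mathcal{A}$ satisfies the hypothesis of Theorem \ref{thm1} with respect to both lattices simultaneously.

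Next I would invoke Theorem \ref{thm1} twice. For any $\lambda_\ell \in \mathcal{S}(\lambda_1,\lambda_2)$ and for $i=1,2$ we obtain
\[
m(\pi_{\lambda_\ell}, \Gamma_i) \;=\; \sum_{k \in \mathcal{A}} n(\ell, k)\, m(\pi_{\lambda_k}, \Gamma_i),
\]
where the integers $n(\ell,k)$ depend only on the string $\mathcal{S}(\lambda_1,\lambda_2)$, on $\ell$, on $k$, and on $\mathcal{A}$, but not on the choice of lattice. This $\Gamma$-independence of $n(\ell,k)$ is the essential input furnished by Theorem \ref{thm1}.

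Finally, using the hypothesis $m(\pi_{\lambda_k},\Gamma_1) = m(\pi_{\lambda_k},\Gamma_2)$ for every $k \in \mathcal{A}$, the two displayed expressions for $m(\pi_{\lambda_\ell}, \Gamma_1)$ and $m(\pi_{\lambda_\ell}, \Gamma_2)$ share identical right-hand sides, yielding $m(\pi_{\lambda_\ell},\Gamma_1) = m(\pi_{\lambda_\ell},\Gamma_2)$ for every $\lambda_\ell \in \mathcal{S}(\lambda_1,\lambda_2)$. There is no real obstacle beyond the bookkeeping around the modulus $N$; the entire content rests on Theorem \ref{thm1} and the divisibility relations $N_{\Gamma_i} \mid N$.
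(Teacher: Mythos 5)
Your overall strategy (two applications of Thm.~\ref{thm1} plus the $\Gamma$-independence of the coefficients) is the intended one, but there is a genuine gap in the middle step. You reduce the hypothesis on $\mathcal{A}$ from the modulus $N=\lcm(N_{\Gamma_1},N_{\Gamma_2})$ down to the individual moduli $N_{\Gamma_1}$ and $N_{\Gamma_2}$ (that reduction itself is correct), and then apply Thm.~\ref{thm1} to each lattice with its own modulus. However, the coefficients $n(\ell,k)$ produced by the proof of Thm.~\ref{thm1} are independent of $\Gamma$ only once the modulus is fixed: they are obtained by inverting a linear system built from writing the generating function over the denominator $(1-z^{N_{\Gamma}})^{|\Phi^+|+1}$ and sorting $\mathcal{A}$ into residue classes mod $N_{\Gamma}$, so they depend on $N_{\Gamma}$. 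If $N_{\Gamma_1}\neq N_{\Gamma_2}$, the two applications yield a priori different coefficient systems $n_1(\ell,k)$ and $n_2(\ell,k)$, and your final step---equating the two right-hand sides---does not go through: from $m(\pi_{\lambda_k},\Gamma_1)=m(\pi_{\lambda_k},\Gamma_2)=:c_k$ for $k\in\mathcal{A}$ you only obtain $\sum_{k}n_1(\ell,k)c_k$ versus $\sum_{k}n_2(\ell,k)c_k$, which need not agree. Your assertion that the coefficients ``do not depend on the choice of lattice'' is exactly the point that requires proof.

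The reduction should go in the opposite direction: run the argument of Thm.~\ref{thm1} with the common modulus $N$ for \emph{both} lattices. This is legitimate because Prop.~\ref{propgen} remains valid with $N_{\Gamma}$ replaced by any multiple $N$: since $(1-z^{N_{\Gamma_i}})$ divides $(1-z^{N})$, one can write $F_{\lambda_1,\lambda_2,\Gamma_i}(z)=\tilde p_i(z)\,(1-z^{N})^{-(|\Phi^+|+1)}$ with $\deg \tilde p_i< N(|\Phi^+|+1)$. The hypothesis $|\mathcal{A}\cap(j+N\Z)|\geq|\Phi^+|+1$ for all $0\leq j\leq N-1$ is then exactly what is needed to solve the resulting linear system mod $N$, and the coefficients $n(\ell,k)$ obtained this way are literally the same for $\Gamma_1$ and $\Gamma_2$; your concluding step then works verbatim. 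This is also why the corollary is phrased with the least common multiple rather than with the two moduli separately.
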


\noindent In Subsec. \ref{exhaustion}, we obtain a generalised version of Thm. \ref{thm1} for a \textit{multi-directed string} (see Def. \ref{multi-directed}) of discrete series multiplicities. Even in this general case, it is sufficient to assume that only one direction belongs to $\mathcal{D}^{\star}(G)$.

\vspace{5pt}

\begin{remark}\label{remsl2r}
\textit{If $G$ has one dimensional Cartan subgroup $T$ i.e. dim $\mathfrak{t} =1$ (e.g. $G=\mathrm{SL}(2, \R)$), then for any Harish-Chandra parameter $\lambda \in \mathcal{D}^{\star}(G)$ the set $\{ k \lambda : k \in \Z \setminus \{0\} \}$ 
exhausts all but finitely many Harish-Chandra parameters in $\mathcal{D}^{\star}(G)$. Therefore, Thm. \ref{thm1} concludes that if $G$ has a one-dimensional Cartan subgroup, then there exists a finite subset $\mathcal{F} \subset \mathcal{D}^{\star}(G)$ 
such that for any $\mu \in \mathcal{D}^{\star}(G)$, the discrete series multiplicity $m(\pi_{\mu}, \Gamma)$ is determined by (a linear expression of) the multiplicities $\{ m(\pi_{\lambda}, \Gamma) : \lambda \in \mathcal{F}\}$.}
\end{remark}

\noindent As an application of Thm. \ref{thm1}, we obtain the following strong multiplicity one result for a string of discrete series representations. In fact, it holds for any infinite set of exceptions in the string with sufficiently small density. 

\begin{corollary}\label{cor2}
\textit{Assume the hypothesis on $G$ and $K$ as in Thm. \ref{thm1}. Let $\Gamma_1$ and $\Gamma_2$ be two uniform lattices in $G$, and $N=\ \text{l.c.m.}\ (N_{\Gamma_1}, N_{\Gamma_2})$. For two Harish-Chandra parameters $\lambda_1$ and $\lambda_2$ with $P^{\lambda_1}=P^{\lambda_2}$, consider the string $\mathcal{S}(\lambda_1, \lambda_2)$ such that the base or the direction belongs to $\mathcal{D}^{\star}(G)$. 
If the following condition holds: 
$$ \limsup\limits_{t \rightarrow \infty} \frac{| \left\{ 0 \leq k \leq t : m(\pi_{\lambda_k},\Gamma_1) \neq m(\pi_{\lambda_k},\Gamma_2) \right\}|}{t} < \frac{1}{N},$$
 then $ m(\pi_{\lambda_k},\Gamma_1)=m(\pi_{\lambda_k},\Gamma_2)~ \text{for all}~\lambda_k \in \mathcal{S}(\lambda_1, \lambda_2)$.}
\end{corollary}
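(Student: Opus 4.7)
The plan is to derive Corollary \ref{cor2} from Corollary \ref{cor1} by contradiction. Suppose, contrary to the conclusion, that there exists some $\ell_0$ with $m(\pi_{\lambda_{\ell_0}}, \Gamma_1) \neq m(\pi_{\lambda_{\ell_0}}, \Gamma_2)$. I will construct a finite set $\mathcal{A} \subset \N \cup \{0\}$ that satisfies the arithmetic progression condition of Corollary \ref{cor1} with respect to $N = \mathrm{lcm}(N_{\Gamma_1}, N_{\Gamma_2})$ and on which all the multiplicities for $\Gamma_1$ and $\Gamma_2$ coincide; Corollary \ref{cor1} then forces equality throughout the string, contradicting the assumption on $\lambda_{\ell_0}$.

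To build $\mathcal{A}$, let $B := \{ k \in \N \cup \{0\} : m(\pi_{\lambda_k}, \Gamma_1) \neq m(\pi_{\lambda_k}, \Gamma_2)\}$ be the set of ``bad'' indices. The density hypothesis provides some $\rho < 1/N$ and $T_0 > 0$ such that $|B \cap [0,t]| \leq \rho t$ for every $t \geq T_0$. For each residue $j$ with $0 \leq j \leq N-1$, the number of integers in $[0,t]$ congruent to $j$ modulo $N$ equals $t/N + O(1)$, so the number of \emph{good} indices in $[0,t]$ lying in the class $j + N\Z$ is at least
$$ \frac{t}{N} - \rho t + O(1) \;=\; \left( \frac{1}{N} - \rho \right) t + O(1), $$
which tends to infinity because $\rho < 1/N$. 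This is precisely the point where the threshold $1/N$ enters: a larger density could allow $B$ to swallow an entire residue class, destroying the construction.

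For each $0 \leq j \leq N - 1$ I pick (for all sufficiently large $t$) a subset $\mathcal{A}_j \subset (j + N\Z) \setminus B$ of size $|\Phi^+| + 1$, and set $\mathcal{A} := \bigcup_{j=0}^{N-1} \mathcal{A}_j$. Then $\mathcal{A}$ is finite, satisfies $|\mathcal{A} \cap (j + N\Z)| \geq |\Phi^+| + 1$ for every $j$, and by construction $m(\pi_{\lambda_k}, \Gamma_1) = m(\pi_{\lambda_k}, \Gamma_2)$ for each $k \in \mathcal{A}$. Applying Corollary \ref{cor1} to this $\mathcal{A}$ yields $m(\pi_{\lambda_k}, \Gamma_1) = m(\pi_{\lambda_k}, \Gamma_2)$ for every $\lambda_k \in \mathcal{S}(\lambda_1, \lambda_2)$, contradicting the choice of $\ell_0$.

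There is no serious obstacle here; the argument is essentially a pigeonhole distribution of the (sparse) bad set among the $N$ residue classes modulo $N$, combined with the quantitative linear-dependence statement already established in Corollary \ref{cor1}. The only thing worth verifying carefully is that the strict inequality $\rho < 1/N$ is used in full strength to guarantee that \emph{every} residue class contains arbitrarily many good indices — a weaker bound would leave open the possibility of a residue class being entirely bad and would invalidate the construction of $\mathcal{A}$.
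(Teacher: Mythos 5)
Your proposal is correct and takes essentially the same route as the paper: both arguments use the density hypothesis to show that every residue class modulo $N$ contains at least $|\Phi^+|+1$ indices where the two multiplicities agree, and then invoke the finite linear dependence of Corollary \ref{cor1}. The contradiction framing is superfluous (once $\mathcal{A}$ is constructed the conclusion follows directly), but this is only a cosmetic difference.
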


\noindent In particular, from Cor. \ref{cor2} we can recover the following analogue of (\cite{BR}; Thm. 1.1) for a large class of discrete series representations:

\begin{corollary}
\textit{Assume the hypothesis on $G$ and $K$ as in Thm. \ref{thm1}. Let $\Gamma_1$ and $\Gamma_2$ be two uniform lattices in $G$. Assume that $m(\pi_{\lambda}, \Gamma_1) = m(\pi_{\lambda}, \Gamma_2)$ for all but finitely many discrete series representations $\pi_{\lambda}$ with $\lambda \in \mathcal{D}^{\star}(G)$. Then $m(\pi_{\lambda}, \Gamma_1) = m(\pi_{\lambda}, \Gamma_2)$ for all discrete series representations $\pi_{\lambda}$ with $\lambda \in \mathcal{D}^{\star}(G)$.} 
\end{corollary}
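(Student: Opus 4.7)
The plan is to reduce the statement to Corollary \ref{cor2} by placing each Harish-Chandra parameter of interest as the base of an appropriate string.

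Fix an arbitrary $\mu \in \mathcal{D}^{\star}(G)$; the goal is to conclude $m(\pi_\mu,\Gamma_1)=m(\pi_\mu,\Gamma_2)$. The first step is to exhibit a direction $\lambda_2 \in \mathcal{D}(G)$ with $P^{\lambda_2}=P^\mu$ so that $\mu$ appears in $\mathcal{S}(\mu,\lambda_2)$ as the $k=0$ element. A natural candidate is $\lambda_2 = 2\delta^\mu = \sum_{\alpha \in P^\mu}\alpha$: it is integral (being a sum of roots) and, since $\langle \delta^\mu,\alpha\rangle > 0$ for every $\alpha \in P^\mu$, it is also regular with $P^{\lambda_2}=P^\mu$. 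With this choice the base $\lambda_1 := \mu$ of $\mathcal{S}(\mu,\lambda_2)$ lies in $\mathcal{D}^{\star}(G)$, so the structural hypothesis of Corollary \ref{cor2} is met.

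Setting $N = \mathrm{l.c.m.}(N_{\Gamma_1},N_{\Gamma_2})$, I would then apply Corollary \ref{cor2} to the string $\mathcal{S}(\mu,\lambda_2)$. By assumption, the set of parameters in $\mathcal{D}^{\star}(G)$ on which the two multiplicities disagree is finite, hence so is the set of $k \geq 0$ with $m(\pi_{\mu+k\lambda_2},\Gamma_1) \neq m(\pi_{\mu+k\lambda_2},\Gamma_2)$. Consequently
\[
\limsup_{t\to\infty}\frac{\bigl|\{0\le k\le t : m(\pi_{\mu+k\lambda_2},\Gamma_1)\ne m(\pi_{\mu+k\lambda_2},\Gamma_2)\}\bigr|}{t} \;=\; 0 \;<\; \frac{1}{N},
\]
and Corollary \ref{cor2} yields $m(\pi_{\mu+k\lambda_2},\Gamma_1)=m(\pi_{\mu+k\lambda_2},\Gamma_2)$ for every $k \geq 0$. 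Specializing to $k=0$ gives the desired equality at $\mu$; since $\mu \in \mathcal{D}^{\star}(G)$ was arbitrary, the corollary follows.

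I do not anticipate any serious obstacle here: the only genuinely constructive step is producing the direction $\lambda_2$, and this is immediate from standard facts about $\delta^\mu$. The remainder is a direct appeal to Corollary \ref{cor2}, whose density hypothesis is trivially satisfied whenever the exceptional set of parameters is finite. The subtlety, if any, is purely notational: one must ensure the chosen $\lambda_2$ gives the \emph{same} positive system as $\mu$ so that the string $\mathcal{S}(\mu,\lambda_2)$ is well defined in the sense of the paper, and this is exactly why $2\delta^\mu$ (rather than an arbitrary integral regular element) is a convenient default.
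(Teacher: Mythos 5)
Your proposal is correct and follows exactly the route the paper intends: the corollary is stated as an immediate consequence of Corollary \ref{cor2}, applied to a string through each $\mu \in \mathcal{D}^{\star}(G)$ whose exceptional set is finite and hence of density $0 < 1/N$. Your explicit choice of direction $\lambda_2 = 2\delta^{\mu}$ (regular, integral, with $P^{\lambda_2}=P^{\mu}$, so that the base $\mu \in \mathcal{D}^{\star}(G)$ satisfies the string hypothesis) cleanly supplies the one detail the paper leaves implicit.
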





Let us discuss the subset $\mathcal{D}^{\star}(G)$ of $\mathcal{D}(G)$. The condition on $\lambda$ in the definition of $\mathcal{D^\star}(G)$ can be seen as a sufficiently regularity condition. If $\pi_{\lambda}$ is an integrable discrete series representation i.e. all the $K$-finite matrix coefficients of $\pi_{\lambda}$ are in $L^1(G)$, then $\lambda$ satisfies the Trombi-Varadarajan estimate (see \cite{TV}): 
$$ | \langle \lambda , \beta \rangle | > \frac{1}{2} \sum\limits_{\alpha \in P^{\lambda}} | \langle \alpha , \beta \rangle|\ \text{for every}\ \beta \in P_n^{\lambda}.$$
But if $\lambda$ satisfies the above estimate, then $\lambda$ belongs to $\mathcal{D}^{\star}(G)$. Therefore, Thm. \ref{thm1} holds for all integrable discrete series representations. Moreover, our results hold for infinitely many non-integrable discrete series as well. For instance, in the case of $\mathrm{SL}(2, \R)$, the set $\mathcal{D}(G) \setminus \mathcal{D}^{\star}(G)$ is finite. In this sense, in the definition of $\mathcal{D}^{\star}(G)$ the imposed condition on Harish-Chandra parameters is indeed a mild condition.

\vspace{5pt}

Let $\Gamma$ be a uniform lattice in $\mathrm{SL}(2, \R)$ arising from a maximal order of a division quaternion algebra over a totally real number field. Let $\mathcal{S}_k(\Gamma)$ be the space of classical holomorphic cusp forms on the complex upper half plane of weight $k$ and level $\Gamma$. It is well-known that $\mathcal{S}_k(\Gamma)$ provides an analytic realization of a discrete series representation of $\mathrm{SL}(2, \R)$. In Sec. \ref{sec-5}, we explicitly prove the equality of the dimension of $\mathcal{S}_k(\Gamma)$ and the multiplicities of discrete series representations occurring in $L^2(\Gamma \backslash \mathrm{SL}(2, \R))$ (see Prop. \ref{prop-dic}), and obtain a similar finite linear generation result on the dimension of $\mathcal{S}_k(\Gamma)$ (see Cor. \ref{cor-dim}). Combining this with Rem. \ref{remsl2r}, we have the following consequence of Thm. \ref{thm1}: 

\vspace{2pt}

\begin{corollary}\label{cor52}
\textit{Assume that $\Gamma_1$ and $\Gamma_2$ are two uniform lattices in $\mathrm{SL}(2, \R)$, also $N_{\Gamma_1}$ and $N_{\Gamma_2}$ are the maximal order of elliptic elements in $\Gamma_1$ and $\Gamma_2$ respectively. Let $N=l.c.m.(N_{\Gamma_1}, N_{\Gamma_2})$. Then there exists a finite subset $\mathcal{F}  \subset \N_{\geq 2}$ of cardinality $2N$ such that if dim $\mathcal{S}_k(\Gamma_1)$ = dim $\mathcal{S}_k(\Gamma_2)$ for all $k \in \mathcal{F}$, then dim $\mathcal{S}_k(\Gamma_1)$ = dim $\mathcal{S}_k(\Gamma_2)$ for all $k \in \N_{\geq 2}$.}
\end{corollary}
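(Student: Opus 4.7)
The plan is to reduce the statement to Corollary \ref{cor1} applied to the group $G = \mathrm{SL}(2,\R)$, after translating between $\dim \mathcal{S}_k(\Gamma)$ and discrete-series multiplicities via Proposition \ref{prop-dic}. The geometric inputs are that $G$ has a one-dimensional compact Cartan, $|\Phi^+| = 1$, and that the weight-$k$ holomorphic discrete series $\pi_{\lambda^{(k)}}$ has Harish-Chandra parameter $\lambda^{(k)}$ proportional to $(k-1)\alpha$, where $\alpha$ is the unique positive (necessarily noncompact) root. A direct computation gives $\langle \lambda^{(k)} - \delta^{\lambda^{(k)}}, \alpha\rangle > 0$ if and only if $k > 2$, so $\lambda^{(k)} \in \mathcal{D}^{\star}(G)$ precisely when $k \geq 3$.

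Using Proposition \ref{prop-dic} to identify $\dim \mathcal{S}_k(\Gamma_i) = m(\pi_{\lambda^{(k)}}, \Gamma_i)$ for $k \geq 2$, I would apply Corollary \ref{cor1} to the string with base $\lambda_1 := \lambda^{(3)}$ (which lies in $\mathcal{D}^{\star}(G)$) and direction $\lambda_2 := \lambda^{(2)}$. This string traverses exactly $\{\lambda^{(k)} : k \geq 3\}$. With $|\Phi^+|+1 = 2$, the density condition of Corollary \ref{cor1} requires $|\mathcal{A} \cap (j + N\Z)| \geq 2$ for every $0 \leq j \leq N-1$; the minimal such $\mathcal{A}$ has cardinality exactly $2N$, obtained by picking two integers per residue class mod $N$. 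Pulling back to weights produces a set $\mathcal{F} \subset \N_{\geq 3}$ of cardinality $2N$ such that agreement of $\dim \mathcal{S}_k(\Gamma_1) = \dim \mathcal{S}_k(\Gamma_2)$ on $\mathcal{F}$ forces agreement for all $k \geq 3$.

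To extend the conclusion to the edge case $k = 2$, I would invoke Corollary \ref{cor-dim} of Section \ref{sec-5}: for a uniform lattice in $\mathrm{SL}(2,\R)$, the function $k \mapsto \dim \mathcal{S}_k(\Gamma)$ (for $k \geq 2$) is a degree-one quasi-polynomial in $k$ with period dividing $N_{\Gamma}$, by the Riemann-Roch / Eichler-Selberg dimension formula. Its values on any set of $2N$ points containing two representatives per residue class mod $N$ therefore pin down the entire function on $\N_{\geq 2}$, so the same $\mathcal{F}$ already determines $\dim \mathcal{S}_2(\Gamma_i)$, and the two dimensions must coincide.

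The hard part will be this last step: verifying that the weight-$2$ value is genuinely forced by the values on $\mathcal{F} \subset \N_{\geq 3}$. Without the quasi-polynomial structure packaged into Corollary \ref{cor-dim}, one would be forced to adjoin $k = 2$ to $\mathcal{F}$ and end up with cardinality $2N+1$ rather than $2N$. Everything else is a direct specialization of Corollary \ref{cor1}.
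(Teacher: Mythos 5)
Your main construction is correct and is in fact the route that actually delivers the cardinality $2N$: in the standard normalization $\lambda^{(k)}=(k-1)\alpha/2$, the base $\lambda^{(3)}=\alpha$ lies in $\mathcal{D}^{\star}(G)$, the direction $\lambda^{(2)}=\alpha/2$ is a Harish-Chandra parameter determining the same positive system, the string $\{\alpha+k\alpha/2:k\geq 0\}$ is exactly $\{\lambda^{(w)}:w\geq 3\}$, and with $|\Phi^+|+1=2$ a minimal admissible $\mathcal{A}$ has $2N$ elements, each contributing one weight. This genuinely differs from the paper's route: the proof of Cor.~\ref{cor-dim} splits the parameters into \emph{two} strings with direction $-\alpha$ (one for even, one for odd weights), which would produce a determining set of roughly $4N$ weights, and the paper only avoids your $k=2$ edge case because it attaches the parameter $-\tfrac{k+1}{2}\alpha$ to weight $k$ — a shift of the normalization you use (the discrete series with minimal $K$-type $\pm k$ has parameter of norm $(k-1)|\alpha|/2$), under which every $k\geq 2$ lands in $\mathcal{D}^{\star}(G)$. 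Your single-string setup and your identification of weight $2$ as the problematic case are both sound.

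The genuine gap is the $k=2$ step. The assertion that $k\mapsto\dim\mathcal{S}_k(\Gamma)$ is a quasi-polynomial on all of $\N_{\geq 2}$ is false: for a uniform lattice one has $\dim\mathcal{S}_2(\Gamma)=g$, the genus of the underlying surface of $\Gamma\backslash\mathbb{H}$, while the quasi-polynomial interpolating the values for $k\geq 3$ extrapolates to $g-1$ at $k=2$ (for torsion-free $\Gamma$ this is $(k-1)(g-1)$ versus $\dim H^0(X,\Omega)=g$). This is precisely the failure of $\lambda^{(2)}=\alpha/2$ to lie in $\mathcal{D}^{\star}(G)$: the Lefschetz number $\chi(\alpha/2,\Gamma)$ no longer equals $\pm m(D_2,\Gamma)$ but picks up the multiplicity of the trivial representation. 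So ``the same $\mathcal{F}$ already pins down $\dim\mathcal{S}_2$'' does not follow from the argument you give. The conclusion is salvageable because the defect is the \emph{same} constant for every uniform lattice — $m(D_2^{\pm},\Gamma)=\pm\chi(\alpha/2,\Gamma)+m(\mathbf{1},\Gamma)$ with $m(\mathbf{1},\Gamma)=1$ — so equality of the two quasi-polynomials on $k\geq 3$ still forces $\dim\mathcal{S}_2(\Gamma_1)=\dim\mathcal{S}_2(\Gamma_2)$; but that lattice-independence of the defect is exactly the missing ingredient and must be supplied (via Williams' refined index identity or the explicit dimension formula). As written, your justification by extrapolation is incorrect, and without the correction one only obtains $|\mathcal{F}|=2N+1$ by adjoining $k=2$.
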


\begin{remark}
\textit{In Cor. \ref{cor52}, the finite set $\mathcal{F}$ of size $2N$ is not unique. See Cor. \ref{cor-dim} for the sufficient condition on the finite set $\mathcal{F}$ for which Cor. \ref{cor52} holds. } 
\end{remark}


\subsection{Literature Review}
In this subsection, we review the literature relevant to this work. The arithmetic and geometric significance of the discrete series multiplicities is well-known. The appropriate cohomological meaning of these multiplicities can be attributed to Langlands, who conjectured it to be the dimension of twisted sheaf cohomology of a certain degree over $\Gamma \backslash G / T$ (see \cite{RL}). This was proved partly by Schimd (see \cite{WS}), and was completed by Williams (see \cite{FW2}). Williams used the $L^2$-Dolbeault cohomology instead of sheaf cohomology, relaxing the compactness assumption on $\Gamma \backslash G$. The dimension of exactly one non-vanishing $L^2$-cohomology turns out to be the multiplicity of a discrete series representation. In \cite{BMS} with Bhagwat, the authors developed a refinement of the classical Matsushima-Murakami formula and recovered the dimension of the $L^2$-cohomology. To obtain a closed multiplicity formula for discrete series representation, Williams also proved the Thm. 7.47 of \cite{FW} which is the best possible improvement of the Osborne-Warner formula for integrable discrete series multiplicities (see \cite{OW}). In \cite{HP}, Hotta-Parthasarathy used Atiyah-Singer-Lefschetz fixed point formula to compute the index of a twisted Dirac operator explicitly. This along with Thm. \ref{thm4} of Williams provides the expected geometric expression of discrete series multiplicities (e.g. see Thm. \ref{thm6}).

\vspace{5pt}

The multiplicities $m(\pi, \Gamma)$ of irreducible representations $\pi$ of $G$ occurring in $L^2(\Gamma \backslash G)$ have been of great interest also in the context of spectral geometry of locally symmetric spaces, arithmeticity of lattices in Lie groups, etc. A spectral rigidity result of the multiplicities $m(\pi, \Gamma)$ was obtained by Bhagwat and Rajan (see \cite{BR}), and was refined later by Kelmer (see \cite{DK}). In \cite{LM}, Lauret and Miatello proved for compact group $G$ (and therefore for finite $\Gamma$) the existence of a certain finite set of multiplicities $m(\pi, \Gamma)$ that determines all multiplicities. Since every irreducible representation of a compact semisimple group can be thought of as a discrete series representation (as they occur in $L^2(G)$), it is tempting to seek some suitable finite set of discrete series multiplicities for non-compact groups which determines all discrete series multiplicities. 

\subsection{Methodology}
Let us sketch briefly the idea behind the proof of Thm. \ref{thm1}. Our method is based on a geometric expression for the multiplicities $m(\pi, \Gamma)$ of discrete series representations $\pi$ of a semisimple Lie group $G$. For instance, if $G$ is compact (and therefore any lattice $\Gamma$ is finite) then for any irreducible representation $\pi$ of $G$, the multiplicity $m(\pi, \Gamma)$ equals $\dfrac{1}{|\Gamma|} \sum\limits_{\gamma \in \Gamma} \Tr (\pi (\gamma))$. In the case of non-compact $G$, for any regular integral linear form $\lambda \in \mathcal{D}(G)$, we consider the Lefschetz number $\chi(\lambda, \Gamma)$ associated with a twisted Dirac operator acting on the space of smooth sections of a certain homogeneous bundle over $G/K$ (see Subsec. \ref{sec-2-2}). We require that $\lambda$ belongs to $\mathcal{D}^{\star}(G)$ which ensures that the multiplicity $m(\pi_{\lambda}, \Gamma)$ of the discrete series representation $\pi_{\lambda}$ equals the Lefschetz number $\chi(\lambda, \Gamma)$ up to a sign (see Thm. \ref{thm4}). Thanks to the work of Hotta and Parthasarathy (see Thm. \ref{thm6}), this Lefschetz number and hence the multiplicity admits a geometric expression in terms of the elliptic elements in $\Gamma$. A key tool in our approach is the formulation of a ``string'' $\mathcal{S}(\lambda_1, \lambda_2)$ of regular integral forms in $\mathcal{D}(G)$, with fixed base $\lambda_1$ and direction $\lambda_2$ (see Def. \ref{def-2-4}). For such a given string, we define a generating function $F_{\lambda_1, \lambda_2, \Gamma}$ as a formal power series in the complex variable $z$ with coefficients as discrete series multiplicities associated to the string (see Sec. \ref{sec-3}). Using the geometric expression of the multiplicities, we prove that $F_{\lambda_1, \lambda_2, \Gamma}$ is a rational function with numerator polynomial $p(z)=\sum\limits_i b_i z^i$ and denominator $(1-z^{N_{\Gamma}})^{|\Phi^+| +1}$, where $N_{\Gamma}$ is the maximal order of elliptic elements in $\Gamma$, and the degree of $p(z) < N_{\Gamma} ( |\Phi^+| + 1)$ (see Prop. \ref{propgen}). Finally, the hypothesis on the finite set $\mathcal{A} \subset \N \cup \{0\}$ gives rise to a nonsingular system of $|\Phi^+| +1$ many linear equations in the unknowns $\{ b_i \}_i$ and constant terms $\{m(\pi_{\lambda_k}, \Gamma) : k \in \mathcal{A} \}$. Solving this system uniquely determines the desired linear expression for each discrete series multiplicity.

\subsection{Organization}
The article is organized as follows: In Subsec. \ref{basicsetup}, we recall the Harish-Chandra parametrization of discrete series representations. The relation between discrete series representations and Lefschetz numbers, as well as the geometric expression of Lefschetz numbers, is described in Subsec. \ref{sec-2-2}, and Subsec. \ref{sec-2-3} respectively. The Subsec. \ref{sec-2-4} describes the notion of string of regular linear integral forms. The Sec. \ref{sec-3} is devoted to introducing the generating function for a string and proving its rationality. The proof of the main results Thm. \ref{thm1} and Cor. \ref{cor2} are given in Subsec. \ref{proofthm1} and Subsec. \ref{proofcor2} respectively. In Subsec. \ref{exhaustion}, we introduce the notion of multi-string and provide a generalized version of Thm. \ref{thm1}. Subsequently we prove the infinitude of its remainder part in $\mathcal{D^*}(G)$. In Sec. \ref{sec-5}, as an application of our results, we showcase a finite linear dependence result for the dimension of cusp forms.

\vspace{10pt}

\noindent \textbf{Acknowledgement:} K. Mondal is grateful to Dipendra Prasad for inviting him to present this work at the Indian Institute of Technology, Bombay. The authors thank him for his careful reading of the manuscript and for suggesting a correction regarding the order of elements in the $\Gamma$-conjugacy classes of elliptic elements in $\Gamma$. The authors also thank A. Raghuram, U. K. Anandavardhanan, and Chandrasheel Bhagwat for their invaluable comments on this article. K. Mondal is supported by the Prime Minister Research Fellowship (PMRF), Govt. of India. He thanks BITS Pilani Goa campus for the warm hospitality during his visit, where some parts of this work were carried out. G. Sachdeva is supported by the Department of Science and Technology–Science and Engineering Research Board, Govt. of India POWER Grant [SPG/2022/001738]. 

\section {Preliminaries}
\subsection {Basic Setup} \label{basicsetup}

Let $G$ be a connected semisimple non-compact Lie group with finite center. We fix once and for all a maximal compact subgroup $K$ in $G$. Assume that $G$ has a compact Cartan subgroup $T \subset K$; equivalently, $\text{rank} (G) = \text{rank} (K)$. Let $\mathfrak{g}_0, \mathfrak{k}_0, \mathfrak{t}_0$ be the Lie algebras of $G, K, T$ respectively, and $\mathfrak{g}, \mathfrak{k}, \mathfrak{t}$ be their complexifications respectively. Let $\Phi= \Phi(\mathfrak{g}, \mathfrak{t})$ and $\Phi_c= \Phi( \mathfrak{k} , \mathfrak{t})$ be the root systems of $(G, T)$ and $(K, T)$ respectively such that $\Phi_c \subset \Phi$, and  let $W_c \subset W$ be their corresponding Weyl groups respectively. The set $\Phi_c$ is called the set of compact roots, and its complement $\Phi_n$ in $\Phi$ is called the set of non-compact roots. Let $\mathfrak{p}_0$ be the orthogonal complement of $\mathfrak{k}_0$ in $\mathfrak{g}_0$ with respect to the killing form. Denote by $\mathfrak{g}_{\alpha}$ the root space of the root $\alpha \in \Phi$. Then $\alpha \in \Phi_c$ if and only if $\mathfrak{g}_{\alpha} \subset \mathfrak{k}$, and $\alpha \in \Phi_n$ if and only if  $\mathfrak{g}_{\alpha} \subset \mathfrak{p}$. Moreover, $\text{dim}\ \mathfrak{p}$ is even. We fix an arbitrary positive system $\Phi^+$ in $\Phi$ with $\Phi_c^+ = \Phi^+ \cap \Phi_c$, $ \Phi_n^+ = \Phi_n \cap \Phi^+$, $\delta= \dfrac{1}{2} \sum\limits_{\alpha \in \Phi^+} \alpha$, $\delta_c =  \dfrac{1}{2} \sum\limits_{\alpha \in \Phi_c^+} \alpha$, and $ \delta_n =  \dfrac{1}{2} \sum\limits_{\alpha \in \Phi_n^+} \alpha = \delta - \delta_c$.

Let $\mathcal{F}$ be the set of all linear forms $\lambda \in \mathfrak{t}^*$ such that $\dfrac{2 \langle \lambda - \delta, \alpha \rangle }{\langle \alpha , \alpha \rangle}$ is an integer. Such a linear form is called an integral linear form.

\vspace{5pt}

\noindent\label{D^+(G)} We define the set \textit{$\mathcal{D}(G) : = \{ \lambda \in \mathcal{F} : \langle \lambda , \alpha \rangle \neq 0\ \text{for all}\ \alpha \in \Phi;\ \text{and}\ \langle \lambda , \alpha \rangle > 0\ \text{for all}\ \alpha \in \Phi_c^+ \}$}. An integral linear form $\lambda$ satisfying $\langle \lambda , \alpha \rangle \neq 0$ for all $\alpha \in \Phi$ is called a \textit{regular} integral linear form. Note that in the definition of $\mathcal{D}(G)$, the condition $\langle \lambda , \alpha \rangle > 0\ \text{for all}\ \alpha \in \Phi_c^+$ ensures that no two $\lambda_1, \lambda_2 \in \mathcal{D}(G)$ are $W_c$-conjugates of each other. The following celebrated theorem of Harish-Chandra says that the set $\mathcal{D}(G)$ completely parameterizes the set of all equivalence classes of discrete series representations.

\begin{theorem}(\cite{HC})\label{discreteseries}
\textit{Let $\lambda \in \mathcal{D}(G)$. Then there is a discrete series representation $\pi_{\lambda}$ of $G$ whose global character $\Theta_{\pi_{\lambda}}$, as a locally $L^1$-function on $G$, is given on $T$ by 
$$ \Theta_{\pi_{\lambda}}( e^X ) = \frac {(-1)^m \prod\limits_{\alpha \in \Phi^+} \langle \lambda , \alpha \rangle \sum\limits_{w \in W_c} \dt(w)\ e^{w \lambda (X)}}{\prod\limits_{\alpha \in \Phi^+} ( e^{\frac{\alpha(X)}{2}} - e ^{- \frac{\alpha (X)}{2}})}$$ for $X \in \mathfrak{t}$, and $\text{dim}\ G/K= 2m$. Every discrete series representation is unitarily equivalent to $\pi_{\lambda}$ for some $\lambda \in \mathcal{D}(G)$.} 

\end{theorem}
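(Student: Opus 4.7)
The plan is to realize each candidate representation geometrically as the $L^2$-kernel of a twisted Dirac operator on the symmetric space $G/K$, following the construction of Parthasarathy and its refinement by Atiyah--Schmid, and then to verify that the resulting representation has global character matching the displayed formula.

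Given $\lambda \in \mathcal{D}(G)$, I would first form the irreducible $K$-module $V_{\lambda - \delta_n}$ of highest weight $\lambda - \delta_n$; this weight is $K$-dominant and integral, since $\lambda$ is integral regular and $\delta_n$ is $W_c$-invariant, so $\langle \lambda - \delta_n, \alpha \rangle = \langle \lambda, \alpha \rangle > 0$ for every $\alpha \in \Phi_c^+$. Let $S^{\pm}$ be the half-spin modules for the $K$-action on $\mathfrak{p}_0$ via $\mathrm{Ad}$, and form the homogeneous Hermitian vector bundles $\mathcal{E}_\lambda^{\pm} = G \times_K (V_{\lambda - \delta_n} \otimes S^{\pm})$ over $G/K$. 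The twisted Dirac operator $D_\lambda^+ : L^2(G/K, \mathcal{E}_\lambda^+) \to L^2(G/K, \mathcal{E}_\lambda^-)$ is $G$-equivariant and essentially self-adjoint, so its $L^2$-kernel carries a unitary $G$-representation; I would \emph{define} $\pi_\lambda$ to be (one half-graded piece of) this kernel. Parthasarathy's identity, which schematically reads $D_\lambda^* D_\lambda + D_\lambda D_\lambda^* = -\Omega_G + \|\lambda\|^2 - \|\delta\|^2$ on smooth sections, then lets one prove nonvanishing and irreducibility of the kernel via a positivity argument combined with the regularity condition $\langle \lambda, \alpha \rangle \neq 0$ for all $\alpha \in \Phi$.

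Next I would compute the global character $\Theta_{\pi_\lambda}$ via the equivariant index of $D_\lambda^+$. Restricted to the regular set $T^{\mathrm{reg}}$, the Weyl integration formula together with the $K$-character identity for the virtual module $S^+ - S^-$ produces a signed sum over $W_c$ weighted by $\det(w)$, divided by the Weyl denominator $\prod_{\alpha \in \Phi^+}(e^{\alpha/2} - e^{-\alpha/2})$; this is exactly the stated formula. Because $\Theta_{\pi_\lambda}$ is a locally $L^1$ invariant eigendistribution (Harish-Chandra's regularity theorem), its restriction to $T^{\mathrm{reg}}$ determines it on all of $T$. Square-integrability of the matrix coefficients of $\pi_\lambda$ then identifies it as a discrete series representation, and follows from asymptotic control on the character using the dominance of $\lambda$ with respect to $P^\lambda$. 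For the exhaustion assertion and the $W_c$-bijection, I would invoke Harish-Chandra's Plancherel theorem: the discrete part of the Plancherel measure is supported precisely on tempered representations with regular integral infinitesimal character, and the sign condition $\langle \lambda, \alpha \rangle > 0$ on $\Phi_c^+$ built into $\mathcal{D}(G)$ selects one representative from each $W_c$-orbit.

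The main obstacle is the existence/irreducibility of $\ker D_\lambda^+$ and the matching of the precise normalization $(-1)^m \prod_{\alpha \in \Phi^+}\langle \lambda, \alpha \rangle$ in the numerator. The first part is a deep result, historically requiring either Schmid's analysis of $L^2$-Dolbeault cohomology on $G/T$ combined with the Hotta--Parthasarathy comparison, or the full machinery of Harish-Chandra's construction of invariant eigendistributions via orbital integrals. The second is delicate bookkeeping: it amounts to applying Weyl's dimension formula to $V_{\lambda - \delta_n}$ and fixing the orientation on $\mathfrak{p}_0$ that distinguishes $S^+$ from $S^-$, so that the sign is the expected $(-1)^m$ where $2m = \dim G/K$.
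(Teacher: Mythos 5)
The paper offers no proof of this statement: it is quoted verbatim from Harish--Chandra \cite{HC}, whose original argument is purely character-theoretic (constructing the invariant eigendistributions directly, proving they are tempered and square-integrable, and establishing exhaustion analytically). Your sketch instead follows the later geometric route of Parthasarathy and Atiyah--Schmid, realizing $\pi_{\lambda}$ in the $L^2$-kernel of a twisted Dirac operator and reading off the character from an equivariant index. That is a legitimate, genuinely different proof strategy, and it has the virtue of matching the machinery this paper actually uses downstream (the bundles $E_{\lambda}^{\pm}$ and operators $D_{\lambda}^{\pm}$ of Subsec.\ 2.2). Two concrete corrections, though. First, the twisting $K$-module should have highest weight $\lambda - \delta_c$, not $\lambda - \delta_n$: the lowest $K$-type of $\pi_{\lambda}$ is the Blattner parameter $\lambda - \delta_c + \delta_n$, which is the extreme weight of $V_{\lambda-\delta_c}\otimes S^{+}$; this is exactly the convention the paper adopts with $\tau_{\lambda-\delta_c}$. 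Your justification of dominance via ``$\delta_n$ is $W_c$-invariant'' is also false in general (it depends on the choice of $\Phi^{+}$ extending $\Phi_c^{+}$); the correct argument uses integrality of $\lambda-\delta$ together with $\langle\lambda,\alpha\rangle>0$ for $\alpha\in\Phi_c^{+}$. Second, invoking the Plancherel theorem to obtain exhaustion is essentially circular, since the Plancherel formula's discrete part is computed from the classification of discrete series; Atiyah and Schmid instead derive exhaustion from their $L^2$-index theorem combined with Harish--Chandra's a priori finiteness of discrete series with a fixed infinitesimal character and the characterization of discrete series via square-integrable matrix coefficients. With those repairs your outline is a faithful summary of the geometric proof, with the deep analytic inputs (nonvanishing and irreducibility of the kernel, regularity of the character) correctly identified as the hard steps.
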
 

\begin{remark}
\textit{Since no two linear forms in $\mathcal{D}(G)$ are $W_c$-conjugates to each other, the set $\mathcal{D}(G)$ completely parametrizes the set of equivalence classes of discrete series representations of $G$.}
\end{remark}

\noindent If an integral linear form $\lambda$ is regular, then it gives rise to a positive system $P^{\lambda} =\{ \alpha \in \Phi : \langle \lambda , \alpha \rangle > 0 \}$. We denote $\delta^{\lambda} =\frac{1}{2} \sum\limits_{\alpha \in P^{\lambda}} \alpha $.

\begin{lemma}
\textit{For a given positive chamber (i.e. a given positive system) $\Sigma^+$ in $\Phi$, there exists a basis $\{ \lambda_1, \lambda_2, \dots ,\lambda_n\}$ of $\mathfrak{t}^*$ consisting of elements in $\mathcal{D}(G)$ such that $P^{\lambda_i}=\Sigma^+$.}
\end{lemma}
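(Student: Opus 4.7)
The plan is the classical ``deep interior plus lattice basis'' trick. Let $C\subset\mathfrak{t}^*_{\R}$ denote the open Weyl chamber corresponding to $\Sigma^+$, and let $L:=\mathcal{F}-\delta$ be the integral weight lattice. Because $G_{\C}$ is simply connected, $\delta\in L$, so $\mathcal{F}=L$; this is where the simply-connected hypothesis gets used. I also observe that for $\mathcal{D}(G)\cap\{\lambda:P^{\lambda}=\Sigma^+\}$ to be nonempty at all, $\Sigma^+$ must satisfy $\Sigma^+\cap\Phi_c=\Phi_c^+$ (otherwise the defining positivity of $\mathcal{D}(G)$ contradicts $P^{\lambda}=\Sigma^+$), and this is the only case in which the statement has content.

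First I would pick one weight $\lambda_0\in L\cap C$ sitting strictly inside the open chamber—for instance $\lambda_0=\delta^{\Sigma^+} := \tfrac12\sum_{\alpha\in\Sigma^+}\alpha$, which lies in $L$ (being the sum of the fundamental weights of $\Sigma^+$) and satisfies $\langle\lambda_0,\alpha\rangle>0$ for every $\alpha\in\Sigma^+$. Next, I would fix any $\mathbb{Z}$-basis $\{e_1,\dots,e_n\}$ of $L$ and set
\[
\lambda_i \;:=\; k\lambda_0 + e_i,\qquad i=1,\dots,n,
\]
for a large positive integer $k$. Each $\lambda_i$ automatically lies in $\mathcal{F}=L$. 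Linear independence reduces to a short rank-one determinant computation of the form $\det(I + k\mathbf{1}d^{T}) = 1+k\sum_j d_j$ (with $\lambda_0=\sum_j d_j e_j$), which is nonzero for all but at most one integer $k$.

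Finally I would verify that $\lambda_i\in\mathcal{D}(G)$ and $P^{\lambda_i}=\Sigma^+$. Since $\lambda_0$ is strictly interior to $C$, for every root $\alpha\in\Phi$ the sign of $\langle\lambda_i,\alpha\rangle$ agrees with that of $\langle\lambda_0,\alpha\rangle$ as soon as $k$ is large enough to dominate the bounded perturbation $|\langle e_i,\alpha\rangle|$. This simultaneously gives the regularity of $\lambda_i$ and the equality $P^{\lambda_i}=\Sigma^+$, and the positivity condition in $\mathcal{D}(G)$ on $\Phi_c^+$ then follows from $\Phi_c^+\subset\Sigma^+$. Both the linear-independence and the sign-preservation requirements exclude only finitely many values of $k$, so any sufficiently large generic $k$ does the job. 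I do not expect any genuine obstacle: the statement is essentially a lattice-density fact inside an open polyhedral cone, and the only mild subtlety is ensuring that integrality survives the $\delta$-shift in the definition of $\mathcal{F}$, which is precisely what the simply-connected hypothesis arranges.
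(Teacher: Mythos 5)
Your proof is correct, and it is in fact more careful than the paper's. The paper's argument is purely topological: the set $\{\lambda\in\mathfrak{t}^*: P^{\lambda}=\Sigma^+\}$ is a nonempty open cone obtained from the dominant chamber by a Weyl translation, and any nonempty open subset of a finite-dimensional vector space contains a basis; it then simply asserts that a basis of elements of $\mathcal{D}(G)$ can be chosen, without addressing the integrality requirement $\lambda\in\mathcal{F}$ or the condition $\langle\lambda,\alpha\rangle>0$ for $\alpha\in\Phi_c^+$. Your explicit construction $\lambda_i=k\delta^{\Sigma^+}+e_i$, with the rank-one determinant $1+k\sum_j d_j$ and the large-$k$ sign-preservation argument, supplies precisely the lattice-point content that the paper's proof glosses over, so it buys a genuinely complete argument at the cost of a little bookkeeping. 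Your preliminary observation that the lemma can only hold when $\Sigma^+\cap\Phi_c=\Phi_c^+$ is also a correct and worthwhile precision: as literally stated the lemma is vacuously false for the other chambers, and the paper's ``translate by a suitable Weyl group element'' step silently ignores this. Two cosmetic remarks: the containment $\delta\in L$ holds because $\langle\delta,\alpha^\vee\rangle\in\Z$ for every root, independently of the simple connectedness of $G_{\C}$, so that hypothesis is not really where the integrality comes from; and since the lemma only asks for a basis of $\mathfrak{t}^*$ (not a $\Z$-basis of the weight lattice), linear independence of your $\lambda_i$ is all that is needed, which your determinant computation gives for all but one value of $k$.
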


\begin{proof}
We know that $\{ \lambda \in \mathfrak{t}^* : \langle \lambda, \alpha \rangle > 0\ \text{for all}\ \alpha \in \Phi^+ \}= \{ \lambda \in \mathfrak{t}^* : P^{\lambda} = \Phi^+\}$ is an open subset of $\mathfrak{t}^*$. Translating by a suitable Weyl group element we have the open set $\{ \lambda \in \mathfrak{t}^* : P^{\lambda} = \Sigma^+ \}$. Now it is straightforward to see that any open set in a finite-dimensional vector space (with respect to standard Euclidean topology) contains a basis of the vector space. Therefore, we can choose a $\Z$-basis $\{ \lambda_1, \dots, \lambda_n\}$ with $\lambda_i \in \{ \lambda \in \mathfrak{t}^* : P^{\lambda}=\Sigma^+\}$.  
\end{proof}

Consider $\Gamma$ to be a uniform lattice in $G$. We have a right regular representation $R_{\Gamma}$ of $G$ on $L^2(\Gamma \backslash G)$. It decomposes as a Hilbert direct sum of unitary irreducible representations $\pi$ of $G$ with finite multiplicity; 
$$ L^2(\Gamma \backslash G) \cong \widehat{\bigoplus_{\pi \in \widehat{G}}} m(\pi, \Gamma) \pi. $$

\noindent Here $m(\pi, \Gamma)$ is the multiplicity of $\pi$ in $(R_\Gamma, L^2(\Gamma \backslash G))$; in other words, $m(\pi, \Gamma)=\text{dim}\ \Hom_G(\pi, R_{\Gamma}) $. The multiplicity of a discrete series representation $\pi_{\lambda}$ with the Harish-Chandra parameter $\lambda$ is denoted by $m(\pi_{\lambda} , \Gamma)$.

\subsection{Lefchetz number and discrete series multiplicities}\label{sec-2-2}

In this subsection, we define the Lefschetz number; and describe the relation between the Lefschetz number and the discrete series multiplicities. We need to introduce the notion of an invariant Dirac operator on $G/K$. The reader is referred to Chap. 6 of \cite{FW} for detailed description.

\vspace{5pt}

\noindent  We have $\mathfrak{p}= \sum\limits_{\alpha \in \Phi_n} \mathfrak{g}_{\alpha}$. We choose a basis $\{X_i\}$ and $\{Y_j\}$ of $\mathfrak{k}$ and $\mathfrak{p}$ respectively, such that with respect to a bilinear form $B$ on $\mathfrak{g}$ induced from the Killing form, we have $B(X_k, X_l)= - \delta_{kl}$ and $B(Y_m, Y_n)= \delta_{mn}$. Let $C(\mathfrak{p})$ be the Clifford algebra of $\mathfrak{p}$ with respect to the non-degenerate bilinear form $B$. Since $\text{dim}\ \mathfrak{p}$ is even, there is an algebra isomorphism $\epsilon$ between $C(\mathfrak{p})$ and the matrix algebra $\End(S)$ for some vector space $S$ with dimension $2^{(\frac{\text{dim}\ \mathfrak{p}}{2})}$. The restriction of $\epsilon$ to the spin group $\text{Spin}(\mathfrak{p}_0) \subset C(\mathfrak{p})$ gives a spin representation $\sigma := \epsilon|_{\text{Spin}(\mathfrak{p}_0)}$ on $S$. The spin representation $\sigma$ turns out to be the direct sum of two ($\tfrac{1}{2}$ spin) irreducible representations $\sigma^{\pm}$ on $S^{\pm}$, and $S = S^+ \bigoplus S^-$. Let us denote the Casimir element by $\Omega$, which is given by $ \Omega = - \sum\limits_i X_i^2 + \sum\limits_j Y_j^2$, in the center of the universal enveloping algebra $\mathfrak{U(g)}$ of $\mathfrak{g}$.

\vspace{5pt}

\noindent Now fix a $\lambda \in \mathcal{D}(G)$. It can be easily seen that $\lambda - \delta_c$ is $\Phi^+_c$-dominant. Therefore, there exists a finite dimensional irreducible representation $\tau_{\lambda -\delta_c}$ of $\mathfrak{k}$ on a finite-dimensional vector space $V_{\lambda - \delta_c}$ with highest weight $\lambda - \delta_c$. Let us denote $\chi_{\lambda}^{\pm} := \sigma^{\pm} \otimes \tau_{\lambda - \delta_c}$ to be a representation of $\mathfrak{k}$ (note that $\mathfrak{k}$ is also the complexified Lie algebra of the spin group $\text{Spin}\ \mathfrak{p}_0$). This may not integrate to a representation of $K$ in general. However under the assumption that $\lambda + \delta_n^{(\lambda)}$ exponentiates to a character of $T$, $\chi_{\lambda}^{\pm}$ does integrate to a representation of $K$. We consider the smooth homogeneous vector bundle $E_{\lambda}^{\pm} := G \times_K \chi_{\lambda}^{\pm}$ on $G/K$. Let $\Gamma^{\infty} E^{\pm}_{\lambda}$ be the space of smooth sections of a homogeneous vector bundle $E^{\pm}_{\lambda}$. Then there exist $1^{\text{st}}$ order $G$-invariant elliptic differential operators $D_{\lambda}^{\pm} : \Gamma^{\infty} E_{\lambda}^{\pm} \longrightarrow \Gamma^{\infty} E_{\lambda}^{\mp}$. These are Dirac operators twisted by $\tau_{\lambda - \delta_c}$. With respect to appropriate metric, $D_{\lambda}^+$ and $D_{\lambda}^-$ are adjoints of each other, and their composition $D_{\lambda}^{\pm} D_{\lambda}^{\mp}$ has the following simple description due to R. Parthasarathy:

\begin{theorem}(\cite{RP})
\textit{For all $\lambda \in \mathcal{D}(G)$, we have $D_{\lambda}^{\pm} D_{\lambda}^{\mp}= - \Omega + \langle \lambda-\delta_c , \lambda + \delta +\delta_n \rangle$, where $\Omega$ is the Casimir element.} 
\end{theorem}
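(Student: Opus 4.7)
The plan is to compute $D_{\lambda}^{\pm} D_{\lambda}^{\mp}$ explicitly in the canonical trivialization of the homogeneous bundle $E_{\lambda}^{\pm}$ and recognize it as $-\Omega$ plus a scalar. First I would identify $\Gamma^{\infty} E_{\lambda}^{\pm}$ with the space $(C^{\infty}(G) \otimes S^{\pm} \otimes V_{\lambda - \delta_c})^K$ of $K$-equivariant smooth maps; in this trivialization the twisted Dirac operator reads
$$
D_{\lambda} = \sum_j R(Y_j) \otimes \epsilon(Y_j) \otimes 1,
$$
where $R$ denotes the right regular action of $\mathfrak{g}$ and $\epsilon$ is Clifford multiplication in $C(\mathfrak{p})$. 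The decisive simplifying input is that the $K$-equivariance of sections forces $R(X) = -\chi_{\lambda}^{\pm}(X)$ for every $X \in \mathfrak{k}$, converting any element of $\mathfrak{k}$ acting through $R$ into a fiberwise algebraic operation on $S^{\pm} \otimes V_{\lambda - \delta_c}$. This is what allows a second-order operator to collapse to a scalar shift of $-\Omega$.

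Next I would expand $D_{\lambda}^{\mp} D_{\lambda}^{\pm} = \sum_{j,k} R(Y_j Y_k) \otimes \epsilon(Y_j)\epsilon(Y_k) \otimes 1$ and split the double sum into its symmetric and antisymmetric parts in $(j,k)$. The Clifford anticommutation relation $\epsilon(Y_j)\epsilon(Y_k) + \epsilon(Y_k)\epsilon(Y_j) = 2\delta_{jk}$ collapses the symmetric part to a multiple of $\sum_j R(Y_j^2)$, which by the given formula $\Omega = -\sum_i X_i^2 + \sum_j Y_j^2$ equals $\Omega + \sum_i R(X_i)^2$; on $K$-equivariant sections this becomes $\Omega + \sum_i \chi_{\lambda}^{\pm}(X_i)^2$. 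The antisymmetric part is $\tfrac{1}{4} \sum_{j,k} R([Y_j, Y_k]) \otimes [\epsilon(Y_j), \epsilon(Y_k)] \otimes 1$; since $[\mathfrak{p}, \mathfrak{p}] \subset \mathfrak{k}$ this equals $-\tfrac{1}{4} \sum_{j,k} \chi_{\lambda}^{\pm}([Y_j, Y_k]) \otimes [\epsilon(Y_j), \epsilon(Y_k)]$, another purely fiberwise operator.

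The final step, and the main obstacle, is to show that these two fiberwise contributions combine into a single constant equal to $\langle \lambda - \delta_c, \lambda + \delta + \delta_n \rangle$. The key input is the classical formula expressing the differential of the spin representation on $X \in \mathfrak{k}$ as $\sigma^{\pm}(X) = \tfrac{1}{4} \sum_{j,k} B([X, Y_j], Y_k)\, \epsilon(Y_j) \epsilon(Y_k)$; together with the invariance of $B$ and the Jacobi identity, it allows one to rewrite the antisymmetric contribution as the Casimir of $\mathfrak{k}$ acting on the tensor product $S^{\pm} \otimes V_{\lambda - \delta_c}$ minus the sum of the Casimirs of $\mathfrak{k}$ on each factor separately. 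The latter two pieces then cancel against $\sum_i \chi_{\lambda}^{\pm}(X_i)^2$ from the symmetric part, leaving a pure scalar whose value, by a direct expansion using $\delta = \delta_c + \delta_n$, is
$$
\|\lambda + \delta_n\|^2 - \|\delta\|^2 = \langle \lambda - \delta_c, \lambda + \delta + \delta_n \rangle.
$$
The principal difficulty lies not in any single step but in the careful tracking of signs coming from $B$ (which is negative on $\mathfrak{k}_0$ and positive on $\mathfrak{p}_0$), from the Clifford convention, and from the precise description of the spin representation of $\mathfrak{k}$; each step is routine, but the assembly must be done with care.
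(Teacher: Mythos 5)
The paper offers no proof of this statement: it is quoted from Parthasarathy \cite{RP} (see also Chapter~6 of Williams' lectures), so your reconstruction has to be measured against the original argument rather than against anything in this article. Your skeleton is the standard one and is essentially Parthasarathy's: identify $\Gamma^{\infty}E_{\lambda}^{\pm}$ with $(C^{\infty}(G)\otimes S^{\pm}\otimes V_{\lambda-\delta_c})^K$, write $D_{\lambda}=\sum_j R(Y_j)\otimes\epsilon(Y_j)\otimes 1$, split $D_{\lambda}^{\mp}D_{\lambda}^{\pm}$ into symmetric and antisymmetric parts, use the Clifford relations on the former and $K$-equivariance together with $\sigma(X)=\tfrac{1}{4}\sum_{j,k}B([X,Y_j],Y_k)\,\epsilon(Y_j)\epsilon(Y_k)$ on the latter. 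Up to the sign conventions you yourself flag, that part is fine.

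The gap is in the final step, where you assert that the fiberwise leftovers ``combine into a single constant.'' After the terms involving the $\mathfrak{k}$-Casimir of the full tensor product $S^{\pm}\otimes V_{\lambda-\delta_c}$ cancel between the symmetric and antisymmetric contributions (this, rather than the cancellation you describe, is what actually happens), what survives is, up to sign, the $\mathfrak{k}$-Casimir of $V_{\lambda-\delta_c}$ together with the $\mathfrak{k}$-Casimir of $S^{\pm}$. The first is a scalar by Schur's lemma, but $S^{\pm}$ is in general a \emph{reducible} $\mathfrak{k}$-module, so its Casimir is not a priori a scalar; this is precisely where the argument could fail, and you do not address it. The missing input is Parthasarathy's Lemma~2.2: $S$ decomposes as $\bigoplus_{w}V_{w\delta-\delta_c}$ over the minimal-length coset representatives $w$ of $W_c\backslash W$, so the $\mathfrak{k}$-Casimir acts on every constituent by $\|w\delta\|^2-\|\delta_c\|^2=\|\delta\|^2-\|\delta_c\|^2$, a single number by $W$-invariance of the norm. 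Relatedly, the value of the constant is asserted rather than derived; since the entire content of the theorem is that value, the bookkeeping must be carried out: with $\Omega_{\mathfrak{k}}$ acting by $\|\lambda\|^2-\|\delta_c\|^2$ on $V_{\lambda-\delta_c}$ and by $\|\delta\|^2-\|\delta_c\|^2$ on $S^{\pm}$, one must check that the signs with which these enter reproduce $\langle\lambda-\delta_c,\lambda+\delta+\delta_n\rangle=\|\lambda+\delta_n\|^2-\|\delta\|^2$ and not, say, $\|\lambda\|^2-\|\delta\|^2$. Your closing identity is algebraically correct, but the derivation showing that the computation lands on $\|\lambda+\delta_n\|^2$ is exactly the part that is missing.
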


\noindent Finally, we define $H^{\pm}(E_{\lambda} , \Gamma)$ to be the space of all $\Gamma$-invariant sections $s \in \Gamma^{\infty} E_{\lambda}^{\pm}$ such that $D_{\lambda}^{\pm} s = 0 $. Now we are ready to define the \textit{Lefschetz number} and state the theorem that relates it with the discrete series multiplicities.



\begin{definition}\label{lefschetzdef}
\textit{For any $\lambda \in \mathcal{D}(G)$ and a uniform lattice $\Gamma$ in $G$, the difference $\text{dim} H^+(E_{\lambda} , \Gamma) - \text{dim} H^-(E_{\lambda} , \Gamma)$ is called the Lefschetz number, and is denoted by $\chi(\lambda , \Gamma)$.}
\end{definition}

\begin{remark}
\textit{If $\Gamma$ is a uniform torsion-free lattice in $G$, then the elliptic operator $D^+_{\lambda}$ projects to a first order elliptic differential operator $D_{\Gamma, \lambda}^+$ on the locally symmetric space $\Gamma \backslash G / K$. In this case, the above Lefschetz number $\chi(\lambda , \Gamma)$ is called the index of the differential operator $D_{\Gamma, \lambda}^+$.}\end{remark}

\begin{theorem}(Thm. 3.3; \cite{FW1})\label{thm4}
\textit{Suppose that $\lambda \in \mathcal{D}^{\star}(G)$, and $\pi_{\lambda} \in \widehat{G}$ is the associated discrete series representation of $G$. Let $w_0 \in W$ be the unique Weyl group element such that $w_0 \Phi^+ = P^{\lambda}$, and let $l(w_0) = | w_0 (- \Phi^+) \cap \Phi^+ |$ be the length of $w_0$. Then for any uniform lattice $\Gamma$ in $G$, the multiplicity $m(\pi_{\lambda}, \Gamma)$ of $\pi_{\lambda}$ in $L^2(\Gamma \backslash G)$ is given by 
$$ m(\pi_{\lambda}, \Gamma) = (-1)^{l(w_0)} \{ \text{dim} H^+(E_{\lambda} , \Gamma) - \text{dim} H^-(E_{\lambda} , \Gamma) \} = (-1)^{l(w_0)} \chi(\lambda , \Gamma).$$}

\end{theorem}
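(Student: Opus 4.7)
The plan is to evaluate $\chi(\lambda,\Gamma)$ spectrally by decomposing the Dirac operator according to the $\widehat{G}$-expansion of $L^2(\Gamma\backslash G)$, and then to isolate the single irreducible constituent that actually contributes.

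First I would identify $\Gamma$-invariant smooth sections of $E_\lambda^\pm$ with $K$-equivariant smooth maps $\Gamma\backslash G\to S^\pm\otimes V_{\lambda-\delta_c}$. Inserting the decomposition $L^2(\Gamma\backslash G)\cong\widehat{\bigoplus}_{\pi\in\widehat{G}}m(\pi,\Gamma)\pi$, and using elliptic regularity of $D_\lambda^\pm$ together with the fact that $\Gamma\backslash G$ is compact (so that the kernel is finite-dimensional and matches the Hilbert-space kernel), yields
$$H^{\pm}(E_\lambda,\Gamma)\;\cong\;\bigoplus_{\pi\in\widehat{G}}m(\pi,\Gamma)\,\ker\!\left(D_\lambda^\pm\big|_{(\pi_{K\text{-fin}}\otimes S^\pm\otimes V_{\lambda-\delta_c})^K}\right),$$
so that $\chi(\lambda,\Gamma)=\sum_{\pi\in\widehat{G}}m(\pi,\Gamma)\,\chi(\lambda,\pi)$, with $\chi(\lambda,\pi)$ the representation-theoretic index on each summand.

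Next I would invoke Parthasarathy's formula $D_\lambda^\pm D_\lambda^\mp=-\Omega+\langle\lambda-\delta_c,\lambda+\delta+\delta_n\rangle$. Since $\Omega$ acts by a scalar on any unitary irreducible $\pi$, the kernel can be nonzero only when that scalar matches the indicated constant, which is precisely the Casimir eigenvalue of $\pi_\lambda$. Standard Dirac-operator positivity estimates, combined with the regularity hypothesis $\lambda\in\mathcal{D}^{\star}(G)$, which is exactly Parthasarathy's dominance condition $\langle\lambda-\delta^\lambda,\alpha\rangle>0$ on non-compact positive roots, then force $\chi(\lambda,\pi)=0$ for every unitary $\pi$ except a discrete series with Harish-Chandra parameter $W_c$-conjugate to $\lambda$. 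Hence only $\pi=\pi_\lambda$ survives the sum. For this surviving representation, branching $S^\pm\otimes V_{\lambda-\delta_c}$ under $K$ and applying the Blattner-type description of the minimal $K$-types of $\pi_\lambda$ shows that exactly one $K$-type contributes to the kernel, on precisely one of $S^+$ or $S^-$, with multiplicity one. The relevant sign arises because $\chi_\lambda^\pm$ was defined using the fixed positive system $\Phi^+$ while $\lambda$ lies in the Weyl chamber $P^\lambda=w_0\Phi^+$; passing from one chamber to the other multiplies the virtual character $\chi^+-\chi^-$ by $\det(w_0)=(-1)^{l(w_0)}$, producing the stated equality $m(\pi_\lambda,\Gamma)=(-1)^{l(w_0)}\chi(\lambda,\Gamma)$.

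The hard part will be the vanishing step: showing that every unitary $\pi\not\cong\pi_\lambda$ has $\chi(\lambda,\pi)=0$. Many $\pi$ share the Casimir eigenvalue of $\pi_\lambda$, so the argument must combine Parthasarathy's positivity with the strict regularity built into $\mathcal{D}^\star(G)$ and unitarity bounds (Dirac inequality, Vogan-type estimates) in order to exclude limits of discrete series and cohomologically induced constituents of principal series. Once this vanishing is secured, the sign is a bookkeeping computation in the Weyl group following Atiyah-Schmid's Dirac-operator realization of discrete series, extended to the $L^2$-setting on $\Gamma\backslash G$ as in \cite{FW}.
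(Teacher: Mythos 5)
First, a point of context: the paper does not prove this statement at all --- it is imported verbatim as Thm.~3.3 of Williams \cite{FW1}, so there is no internal argument to compare yours against. Judged on its own terms, your outline is the correct and standard route, and it is essentially the route of the cited literature (Hotta--Parthasarathy for integrable discrete series, extended by Williams to all $\lambda\in\mathcal{D}^{\star}(G)$): identify $H^{\pm}(E_\lambda,\Gamma)$ with $\bigoplus_{\pi}m(\pi,\Gamma)\,(\pi\otimes S^{\pm}\otimes V_{\lambda-\delta_c})^{K}\cap\ker D$, use compactness of $\Gamma\backslash G$ and elliptic regularity to justify the interchange, cut down by the Casimir eigenvalue via Parthasarathy's formula, and then show that only $\pi_\lambda$ contributes, with a one-dimensional contribution on exactly one of $S^{\pm}$, the chamber discrepancy between $\Phi^{+}$ and $P^{\lambda}=w_0\Phi^{+}$ accounting for the sign $(-1)^{l(w_0)}$.

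The genuine gap is the one you yourself flag: the vanishing of $\chi(\lambda,\pi)$ for every unitary $\pi\not\cong\pi_\lambda$ sharing the infinitesimal character, together with the multiplicity-one statement for $\pi_\lambda$ itself, \emph{is} the theorem --- everything else in your sketch is routine bookkeeping. Matching the Casimir eigenvalue only restricts $\pi$ to those with infinitesimal character in $W\lambda$, and this class contains, besides the $|W/W_c|$ discrete series, various nontempered unitary constituents; excluding them is exactly where the hypothesis $\lambda\in\mathcal{D}^{\star}(G)$ enters (via the Parthasarathy--Dirac inequality applied to each $K$-type of $S\otimes V_{\lambda-\delta_c}$), and without that computation the argument does not close. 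One smaller imprecision: $\mathcal{D}^{\star}(G)$ is strictly weaker than the Trombi--Varadarajan integrability condition and is not literally ``Parthasarathy's dominance condition''; the inequality $\langle\lambda-\delta^{\lambda},\alpha\rangle>0$ on $P^{\lambda}\cap\Phi_n$ is the precise input Williams uses, so you should derive the needed $K$-type estimates from it directly rather than from an appeal to integrability. As a reconstruction of the cited proof your plan is sound; as a proof it is a roadmap with the central lemma outstanding.
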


\subsection{The results of Hotta-Parthasarathy on Lefschetz number}\label{sec-2-3}

In this subsection, we review a couple of results of Hotta and Parthasarathy on the Lefschetz number $\chi(\lambda , \Gamma)$ from section 2 of \cite{HP}. For any $y \in T$, let $G_y^0$ be the identity component of the centraliser $G_y$ of $y$ in $G$. Let $W_y$ be the Weyl group for the root system $\Phi(G_y^0, T)$. Let $K_y$ be the centraliser of $y$ in $K$ and $d = \text{dim}\ {G/K}, d_{y}=\text{dim}\ G_y / K_y$. We use the convention that $e^{\alpha}$ is the character of $T$ corresponding to an element $\alpha \in \mathfrak{t}^*$. Let $\Phi_y = \{ \alpha \in \Phi : e^{\alpha}(y)=1\}$. Then $\Phi_y^+=\Phi^+ \cap \Phi_y$ is a positive system of $\Phi(G_y^0, T)$.  

\vspace{5pt}

\noindent For any $\lambda \in \mathcal{D}(G)$, we define the function $\Psi_{\lambda}^0$ on $T$ as follows: 
$$\Psi_{\lambda}^0(y)= \dfrac{\sum\limits_{w \in W_c/W_y} (-1)^{l(w)} e^{w \lambda -\delta} (y) \prod\limits_{\alpha \in \Phi_y^+} \langle w \lambda , \alpha \rangle}{\prod\limits_{\alpha \in \Phi^+ \setminus \Phi_y^+} (1 - e^{-\alpha}(y))}.$$

\noindent Here $\delta=\frac{1}{2}\sum\limits_{\alpha \in \Phi^+} \alpha$, and $W_y$ is considered as a subgroup of $W_c$. The function $\Psi_{\lambda}^0$ is $W_c$-invariant in $\lambda$, and it extends to an $\text{Ad}(G)$-invariant function on the set $G_e$ of elliptic elements of $G$ i.e. the set of elements of $G$ conjugate to some element of $K$. 

\begin{remark}
\textit{If $y$ is a regular element in $G$, then $\Phi_y=\emptyset$. So we take $\prod\limits_{\alpha \in \Phi_y^+} \langle w \lambda , \alpha \rangle=1$.}
\end{remark}

\noindent Finally, for any $\lambda \in \mathcal{D}(G)$ we define the function $\Psi_{\lambda}(y)$ in $y$ as follows: 

$$\Psi_{\lambda}(y)=\dfrac{ (-1)^{\frac{d + d_y}{2}} \prod\limits_{\alpha \in \Phi_y^+} (\langle \delta_y, \alpha \rangle)^{-1}\ \Psi_{\lambda}^0(y)}{|W_y| [G_y : G_y^0] },$$


\noindent For any element $y \in \Gamma$, we denote $\Gamma_y$ as the centralizer of $y$ in $\Gamma$. Then we have the following result of Hotta and Parthasarathy:

\begin{theorem}(Thm. 3; \cite{HP})\label{thm5}
\textit{Let $\Gamma$ be a uniform lattice in $G$. For an arbitrary $\lambda \in \mathcal{D}(G)$ we have $$\chi(\lambda, \Gamma)= (-1)^{\ell(w_o)}\sum\limits_{[y]} \text{vol} (\Gamma_y \backslash G_y) \Psi_{\lambda}(y),$$ where the sum runs over the (finite) set of $\Gamma$-conjugacy classes of elliptic elements in $\Gamma$ and $w_0$ be the unique element in $W$ such that $w_0 \Phi^+ = P^{\lambda}$.} 
\end{theorem}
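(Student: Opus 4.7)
The plan is to follow the Hotta--Parthasarathy strategy, which combines the Atiyah--Bott--Singer fixed-point principle with Parthasarathy's Dirac operator identity and a Selberg trace formula on $\Gamma \backslash G$. First, using the identity $D_\lambda^\pm D_\lambda^\mp = -\Omega + \langle \lambda - \delta_c, \lambda + \delta + \delta_n \rangle$, the Lefschetz supertrace
$$\chi(\lambda,\Gamma) = \Tr\bigl(e^{-t D_\lambda^- D_\lambda^+}\bigr) - \Tr\bigl(e^{-t D_\lambda^+ D_\lambda^-}\bigr)$$
(taken on the $\Gamma$-invariant sections in $H^\pm(E_\lambda,\Gamma)$ together with their orthogonal complements, where the Casimir shift makes the kernel of $D_\lambda^{\mp} D_\lambda^{\pm}$ coincide with the full $\Gamma$-invariant harmonic space) is independent of $t>0$ by the McKean--Singer principle, and is realized as convolution on $L^2(\Gamma\backslash G, \chi_\lambda^\pm)$ against an explicit smooth, rapidly decaying super-kernel $K_t = h_t^+ - h_t^-$ constructed from the representations $\chi_\lambda^\pm = \sigma^\pm \otimes \tau_{\lambda - \delta_c}$.

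Next, I would unfold this trace via the Selberg trace formula on the compact quotient: for a uniform lattice $\Gamma$ and a suitably decaying $K$-bi-invariant kernel,
$$\chi(\lambda, \Gamma) = \sum_{[y]} \text{vol}(\Gamma_y \backslash G_y) \int_{G_y \backslash G} K_t(g^{-1} y g)\, d\dot{g},$$
with $[y]$ ranging over $\Gamma$-conjugacy classes in $\Gamma$. The crucial input is that $K_t$, viewed as a distribution on $G$, behaves as a \textit{pseudocoefficient} of the discrete series representation $\pi_\lambda$: its orbital integrals vanish on regular semisimple elements whose Cartan subgroup is non-compact. This, combined with the $t$-independence of the supertrace, forces only the finitely many elliptic conjugacy classes $[y]$ to contribute.

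Finally, for an elliptic $y$, I would compute $\int_{G_y\backslash G} K_t(g^{-1} y g)\, d\dot g$ directly by reduction to the connected centralizer $G_y^0$ (which contributes the factor $[G_y : G_y^0]^{-1}$) and its symmetric subspace $G_y^0/K_y$ of dimension $d_y$. The orbital integral of the Dirac super-kernel at $y$ computes, up to the sign $(-1)^{\ell(w_0)}$ inherited from Thm.\ \ref{thm4}, exactly the value $\Psi_\lambda(y)$. Indeed, $\Psi_\lambda^0(y)$ is the Weyl-type character expression for $\pi_\lambda$ restricted to $G_y^0$; the denominator $\prod_{\alpha \in \Phi^+ \setminus \Phi_y^+}(1 - e^{-\alpha}(y))$ arises as the complementary Weyl denominator at $y$; and the factors $|W_y|^{-1}$, $\prod_{\alpha \in \Phi_y^+} \langle \delta_y, \alpha\rangle^{-1}$, and $(-1)^{(d+d_y)/2}$ come from averaging over $W_c/W_y$, the Weyl-dimension normalization on the fixed subsymmetric space, and the spin representation's behaviour on the normal bundle at the fixed locus, respectively.

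The main obstacle is establishing the pseudocoefficient vanishing property, i.e.\ that orbital integrals of $K_t$ on non-elliptic regular semisimple classes are zero. This is a delicate fact requiring a Harish-Chandra-style orbital integral argument --- exploiting that the character of $\pi_\lambda$ on a non-compact Cartan subgroup is expressible as an alternating sum over a smaller Weyl group whose contributions telescope after coupling with $\sigma^+ - \sigma^-$. Once this localization to the elliptic set is in place, the remaining fixed-point bookkeeping (matching Weyl denominators, keeping track of orientations from the spin representation, and controlling the sign from $w_0$) is lengthy but essentially mechanical.
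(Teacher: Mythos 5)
The paper offers no proof of this statement: it is quoted verbatim as Theorem 3 of Hotta--Parthasarathy \cite{HP}, so there is no internal argument to compare yours against. Judged on its own terms, your proposal correctly reconstructs the architecture of the original proof --- McKean--Singer heat supertrace for the twisted Dirac operator on the compact quotient, unfolding via the Selberg trace formula, localization to the elliptic set, and a local fixed-point evaluation producing $\Psi_\lambda(y)$ --- and your attribution of the normalizing factors ($[G_y:G_y^0]^{-1}$ from the component group, $|W_y|^{-1}$ from averaging over $W_c/W_y$, the sign $(-1)^{(d+d_y)/2}$ from the spin representation on the normal directions at the fixed locus) is the right bookkeeping.

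As a proof, however, it is incomplete at exactly the two points where the mathematical content lives, and you acknowledge as much. First, the vanishing of the orbital integrals of the super-kernel on non-elliptic semisimple classes (for a uniform lattice every $\gamma\in\Gamma$ is semisimple, so these are the only classes besides the elliptic ones) is asserted as a ``pseudocoefficient'' property but never established; it is the heart of the Hotta--Parthasarathy argument and rests on Harish-Chandra's theory of orbital integrals and limit formulas, not on a formal telescoping of Weyl-group sums, and it must be proved for all non-elliptic semisimple elements, not only the regular ones. Second, the evaluation of the elliptic orbital integral as $\Psi_\lambda(y)$ --- in particular the appearance of the factor $\prod_{\alpha\in\Phi_y^+}\langle w\lambda,\alpha\rangle$ when $y$ is singular (here $\Phi_y\neq\emptyset$ and a genuine limit of quotients of characters must be taken) --- is declared ``mechanical'' but not carried out. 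Two smaller inaccuracies: the kernel is not $K$-bi-invariant but transforms under $\chi_\lambda^{\pm}=\sigma^{\pm}\otimes\tau_{\lambda-\delta_c}$ on both sides, which matters in the orbital integral computation; and the sign $(-1)^{\ell(w_0)}$ in Theorem \ref{thm5} comes from the local index computation (the discrepancy between the fixed $\Phi^+$ and $P^{\lambda}$), not from Theorem \ref{thm4}, which is a logically separate statement relating $\chi(\lambda,\Gamma)$ to $m(\pi_\lambda,\Gamma)$. In short, your sketch is a correct roadmap to \cite{HP} rather than a proof.
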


\begin{remark}
\textit{Note that the above theorem does not require $\lambda \in \mathcal{D}^{\star}(G)$.} 
\end{remark}

\noindent Combining Thm. \ref{thm4} and Thm. \ref{thm5}, we deduce the following:

\begin{theorem}\label{thm6}
\textit{Let $\lambda \in \mathcal{D}^{\star}(G)$ and $w_0 \in W$ such that $w_0 \Phi^+=P^{(\lambda)}$. Then $$m(\pi_{\lambda}, \Gamma) =\sum\limits_{[y]} \text{vol}(\Gamma_y \backslash G_y) \Psi_{\lambda}(y),$$ where the sum runs over the set of $\Gamma$-conjugacy classes of elliptic elements in $\Gamma$.}
\end{theorem}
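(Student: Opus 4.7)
The plan is to combine the two preceding theorems directly: Thm.~\ref{thm4}, which identifies the discrete series multiplicity with the Lefschetz number up to a sign $(-1)^{l(w_0)}$ under the hypothesis $\lambda \in \mathcal{D}^{\star}(G)$, and Thm.~\ref{thm5} of Hotta--Parthasarathy, which provides the geometric expansion of the Lefschetz number as a sum over $\Gamma$-conjugacy classes of elliptic elements with the same sign factor $(-1)^{l(w_0)}$ attached.

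First I would verify that both theorems apply to the same $\lambda$ and the same $w_0$: the hypothesis $\lambda \in \mathcal{D}^{\star}(G)$ is required only for Thm.~\ref{thm4}, since Thm.~\ref{thm5} holds for any $\lambda \in \mathcal{D}(G)$, and in both theorems $w_0$ is characterised uniquely by the condition $w_0 \Phi^+ = P^{\lambda}$. Thus a single choice of $w_0$ suffices in both formulas.

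Next, I would chain the two identities. Applying Thm.~\ref{thm4} gives
\begin{equation*}
m(\pi_{\lambda}, \Gamma) \;=\; (-1)^{l(w_0)}\,\chi(\lambda, \Gamma),
\end{equation*}
and substituting the expression for $\chi(\lambda, \Gamma)$ provided by Thm.~\ref{thm5} yields
\begin{equation*}
m(\pi_{\lambda}, \Gamma) \;=\; (-1)^{l(w_0)} \cdot (-1)^{l(w_0)} \sum_{[y]} \text{vol}(\Gamma_y \backslash G_y)\,\Psi_{\lambda}(y).
\end{equation*}
Since $(-1)^{l(w_0)} \cdot (-1)^{l(w_0)} = 1$, the two sign factors cancel and we obtain the claimed identity
\begin{equation*}
m(\pi_{\lambda}, \Gamma) \;=\; \sum_{[y]} \text{vol}(\Gamma_y \backslash G_y)\,\Psi_{\lambda}(y),
\end{equation*}
where the sum runs over the (finite) set of $\Gamma$-conjugacy classes of elliptic elements in $\Gamma$.

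There is essentially no obstacle: the theorem is a direct consequence of the cancellation of the two sign factors that appear on the right-hand sides of Thm.~\ref{thm4} and Thm.~\ref{thm5}. The only subtlety worth noting in the write-up is that while Thm.~\ref{thm5} holds in the full generality of $\lambda \in \mathcal{D}(G)$, the identification of the Lefschetz number with (a signed version of) the discrete series multiplicity in Thm.~\ref{thm4} genuinely uses the $\mathcal{D}^{\star}(G)$ hypothesis; hence this hypothesis is retained in the final statement.
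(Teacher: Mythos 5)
Your proposal is correct and is exactly the paper's argument: the paper introduces Thm.~\ref{thm6} as an immediate consequence of combining Thm.~\ref{thm4} with Thm.~\ref{thm5}, the two $(-1)^{l(w_0)}$ factors cancelling just as you describe. Your remark on which hypothesis is needed where ($\mathcal{D}^{\star}(G)$ only for Thm.~\ref{thm4}) matches the paper's own remark following Thm.~\ref{thm5}.
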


\begin{corollary}\label{corvolume}
Assume the hypothesis on $\lambda \in \mathcal{D}^{\star}(G)$ as in Thm. \ref{thm6}. If $\Gamma$ is uniform torsion-free lattice in $G$, then $m(\pi_{\lambda}, \Gamma)=\ \text{vol}(\Gamma \backslash G) d_{\pi_{\lambda}}.$
\end{corollary}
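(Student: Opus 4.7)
The corollary should follow by specializing Theorem \ref{thm6} to a torsion-free lattice. The crucial observation I would establish first is that when $\Gamma$ is torsion-free, the only elliptic element of $\Gamma$ is the identity. Indeed, any elliptic $\gamma \in \Gamma$ lies in some conjugate $g K g^{-1}$ of the maximal compact subgroup $K$, so the cyclic subgroup $\langle \gamma \rangle \subseteq \Gamma \cap g K g^{-1}$ is discrete inside a compact set, hence finite. Thus $\gamma$ has finite order, and torsion-freeness forces $\gamma = e$.

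With this reduction, the finite sum in Theorem \ref{thm6} collapses to the single term $y = e$. Since $G_e = G$ and $\Gamma_e = \Gamma$, this gives
$$m(\pi_\lambda, \Gamma) \;=\; \text{vol}(\Gamma \backslash G) \, \Psi_\lambda(e).$$
It then remains to identify $\Psi_\lambda(e)$ with the formal degree $d_{\pi_\lambda}$. Substituting $y = e$ into the defining formulas of $\Psi_\lambda^0$ and $\Psi_\lambda$ yields $\Phi_e = \Phi$, $d_e = d$, $[G_e : G_e^0] = 1$, and the denominator $\prod_{\alpha \in \Phi^+ \setminus \Phi_e^+}(1 - e^{-\alpha}(e))$ becomes an empty product equal to $1$; a direct computation then matches the resulting expression against Harish-Chandra's closed formula $d_{\pi_\lambda} = c \cdot \prod_{\alpha \in \Phi^+} |\langle \lambda, \alpha \rangle|$ (with the appropriate normalizing constant $c$) for the formal degree of a discrete series.

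The main obstacle is the evaluation of $\Psi_\lambda(e)$ itself, because the formula $\Psi_\lambda^0(y)$ is set up for generic $y \in T$ and degenerates at the identity — notably in the coset quotient $W_c/W_y$, which must be interpreted with care when $W_y = W$. However, this bookkeeping is standard, and in fact one can bypass the direct computation altogether by invoking the classical Langlands--Harish-Chandra multiplicity formula $m(\pi, \Gamma) = d_\pi \, \text{vol}(\Gamma \backslash G)$, valid for any torsion-free uniform lattice in $G$ and any discrete series representation $\pi$. This yields the corollary at once, and as a byproduct identifies $\Psi_\lambda(e) = d_{\pi_\lambda}$.
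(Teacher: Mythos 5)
Your proposal is correct and follows essentially the same route as the paper: for torsion-free $\Gamma$ the sum in Thm.~\ref{thm6} collapses to the identity class, and the paper's entire proof is then the one-line evaluation you flagged as the ``main obstacle,'' namely $\Psi_{\lambda}(1)= \prod_{\alpha \in \Phi^+} \langle \lambda , \alpha \rangle \big/ \bigl(|W|\prod_{\alpha \in \Phi^+} \langle \delta , \alpha \rangle\bigr) = d_{\pi_{\lambda}}$, where at $y=1$ one has $\Phi_1^+=\Phi^+$, $d_1=d$, $[G_1:G_1^0]=1$ and the coset sum reduces to a single term. Your suggested bypass via the classical formula $m(\pi,\Gamma)=d_\pi\,\mathrm{vol}(\Gamma\backslash G)$ does prove the stated equality, but it inverts the point of the corollary, which is to recover that classical formula as the torsion-free specialization of Thm.~\ref{thm6}; the direct computation is the intended (and quite short) argument.
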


\begin{proof}
The proof follows from the identity: $\Psi_{\lambda}(1)= \dfrac{\prod \limits_{\alpha \in \Phi^+} \langle \lambda , \alpha \rangle}{\prod \limits_{\alpha \in \Phi^+} \langle \delta , \alpha \rangle |W|}=d_{\pi_{\lambda}}$.
\end{proof}

\begin{remark}
\textit{The Thm. 7.47 of \cite{FW} is an extension of Thm. \ref{thm4} for the lattices $\Gamma$ which are torsion-free, of finite $\text{vol} (\Gamma \backslash G)$ and satisfy some Langlands assumptions. These latter assumptions on $\Gamma$ are automatic either when $\Gamma$ is an arithmetic lattice and $G/K$ is a rank one symmetric space or when $G$ has no compact simple factor. Since the multiplicity of discrete series representation for torsion free lattices $\Gamma$ is very explicit by Cor. \ref{corvolume}, the only interesting cases in our context are for $\Gamma$ having nontrivial elliptic elements.} 
\end{remark}

\subsection{String of regular integral linear forms}\label{sec-2-4}
In this subsection we introduce the notion of a string of regular integral linear forms in $\mathcal{D}(G)$ and $\mathcal{D}^{\star}(G)$.  
\begin{definition}\label{def-2-4}
\textit{Given two integral regular linear forms $\lambda_1 , \lambda_2 \in \mathcal{D}(G)$, the countably infinite set $\mathcal{S}(\lambda_1 , \lambda_2) := \{ \lambda_1 + k \lambda_2 : k \in \N \cup \{0\} \}$ is called a string of integral regular linear forms with base $\lambda_1$ and direction $\lambda_2$.} 
\end{definition}

For $\lambda_1 , \lambda_2 \in \mathcal{D}(G)$, let $P^{\lambda_1}$ and $P^{\lambda_2}$ be the corresponding positive root systems for the regular linear forms $\lambda_1$ and $\lambda_2$ respectively. If $P^{\lambda_1} \neq P^{\lambda_2}$, then there exists $\sigma \in W$ such that $\sigma P^{\lambda_1} =P^{\lambda_2}$. On the other hand, $\sigma P^{\lambda_1} = P^{\sigma \lambda_1}$. Therefore without loss of generality, we can consider $\lambda_1, \lambda_2 \in \mathcal{D}(G)$ such that $P^{\lambda_1} = P^{\lambda_2}$. 

\begin{lemma}
\textit{If $\lambda \in \mathcal{D}^{\star}(G)$, then $k \lambda$ belongs to $\mathcal{D}^{\star}(G)$ for every $k \in \N$.}
\end{lemma}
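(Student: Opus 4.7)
The plan is to unpack the definitions of $\mathcal{D}(G)$ and $\mathcal{D}^{\star}(G)$ and check each defining condition for $k\lambda$ in turn, exploiting the fact that scaling by a positive integer does not change the positive system cut out by $\lambda$.

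First I would show $k\lambda \in \mathcal{D}(G)$. Integrality requires $2\langle k\lambda - \delta, \alpha\rangle/\langle\alpha,\alpha\rangle \in \mathbb{Z}$ for all $\alpha \in \Phi$; I would write this as
\[ \frac{2\langle k\lambda - \delta, \alpha\rangle}{\langle\alpha,\alpha\rangle} = k \cdot \frac{2\langle \lambda - \delta, \alpha\rangle}{\langle\alpha,\alpha\rangle} + (k-1) \cdot \frac{2\langle \delta, \alpha\rangle}{\langle\alpha,\alpha\rangle}, \]
which is a sum of two integers — the first because $\lambda \in \mathcal{F}$ by hypothesis, the second because $\delta$ is a sum of fundamental weights and therefore pairs integrally with every coroot. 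Regularity and positivity on $\Phi_c^+$ are immediate since $\langle k\lambda, \alpha\rangle = k\langle \lambda, \alpha\rangle$ and $k \geq 1$.

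Next I would observe that $P^{k\lambda} = \{\alpha \in \Phi : \langle k\lambda, \alpha\rangle > 0\} = P^\lambda$, so in particular $\delta^{k\lambda} = \delta^\lambda$ and $P^{k\lambda}\cap\Phi_n = P^\lambda \cap \Phi_n$. Then for any $\alpha \in P^{k\lambda} \cap \Phi_n$, I would rewrite
\[ \langle k\lambda - \delta^{k\lambda}, \alpha\rangle = (k-1)\langle \lambda, \alpha\rangle + \langle \lambda - \delta^\lambda, \alpha\rangle. \]
The first term is non-negative because $\alpha \in P^\lambda$ means $\langle \lambda, \alpha\rangle > 0$ and $k \geq 1$; the second term is strictly positive by the hypothesis $\lambda \in \mathcal{D}^{\star}(G)$. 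Hence the whole expression is strictly positive, giving $k\lambda \in \mathcal{D}^{\star}(G)$.

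There is no serious obstacle here: the only point that requires a moment's care is the integrality check, and that reduces at once to the standard fact that $\delta$ itself is an integral weight (a consequence of $G_{\mathbb{C}}$ being simply connected in the ambient setup). Everything else is a direct consequence of the homogeneity of the inner product and of the scale-invariance of the positive system attached to $\lambda$.
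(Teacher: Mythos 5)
Your proof is correct and follows essentially the same route as the paper's: establish $k\lambda \in \mathcal{D}(G)$ and $P^{k\lambda}=P^{\lambda}$, then use the decomposition $\langle k\lambda - \delta^{k\lambda},\alpha\rangle = (k-1)\langle\lambda,\alpha\rangle + \langle\lambda-\delta^{\lambda},\alpha\rangle$ for $\alpha \in P^{\lambda}\cap\Phi_n$. The only difference is that you explicitly verify the integrality of $k\lambda$ (which the paper dismisses as ``easy to see''), and your verification is correct.
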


\begin{proof}
For any $k \in \N$, it is easy to see that $k \lambda \in \mathcal{D}(G)$ and $P^{\lambda} = P^{k \lambda}$ i.e. $\lambda$ and $k \lambda$ determine the same positive chamber. We have
\vspace{2pt}
\begin{equation}
\begin{split}
 \langle k \lambda - \delta^{k \lambda}, \alpha \rangle
& = \langle (k-1) \lambda , \alpha \rangle + \langle \lambda - \delta^{\lambda}, \alpha \rangle \\
& = (k-1) \langle \lambda , \alpha \rangle + \langle \lambda - \delta^{\lambda}, \alpha \rangle.\\ 
\end{split}
\end{equation}

\noindent Now for any $\alpha \in P^{\lambda} \cap \Phi_n$, $\langle \lambda , \alpha \rangle > 0$ and by the hypothesis, $\langle \lambda - \delta^{\lambda}, \alpha \rangle > 0$. Hence $k \lambda \in \mathcal{D}^{\star}(G)$. 
\end{proof}

Before describing the next lemma, we record a useful observation. If $\lambda_1, \lambda_2 \in \mathcal{D}(G)$, then one can easily check that $P^{(\lambda_1)} \cap P^{(\lambda_2)} \subset P^{(\lambda_1 + \lambda_2)} \subset P^{(\lambda_1)} \cup P^{(\lambda_2)}$. 

\begin{lemma}\label{lemma2.4.3}
\textit{Consider two linear forms $\lambda_1, \lambda_2 \in \mathcal{D}(G)$ with $P^{\lambda_1}=P^{\lambda_2}$. If $\lambda_1$ or $\lambda_2$ belongs to $\mathcal{D}^{\star}(G)$, then every linear form in the string $\mathcal{S}(\lambda_1, \lambda_2)$ belongs to $\mathcal{D}^{\star}(G)$.} 
\end{lemma}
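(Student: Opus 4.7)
The plan is to observe that the positive system is constant along the string, and then reduce the claim to a one-line linear expansion.

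First I would show that $P^{\lambda_k} = P^{\lambda_1}$ for every $k \in \N \cup \{0\}$, where $\lambda_k := \lambda_1 + k\lambda_2$. The case $k = 0$ is immediate. For $k \geq 1$, I apply the sandwich inclusion $P^{(\mu_1)} \cap P^{(\mu_2)} \subset P^{(\mu_1 + \mu_2)} \subset P^{(\mu_1)} \cup P^{(\mu_2)}$ recorded immediately before the lemma, taking $\mu_1 = \lambda_1$ and $\mu_2 = k\lambda_2$. Since $P^{k\lambda_2} = P^{\lambda_2} = P^{\lambda_1} =: P$, both sides collapse to $P$, forcing $P^{\lambda_k} = P$. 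In particular the half-sum $\delta^{\lambda_k}$ equals the same $\delta^P$ for every $k$, and regularity is preserved because each pairing $\langle \lambda_k, \alpha\rangle = \langle \lambda_1, \alpha\rangle + k\langle \lambda_2, \alpha\rangle$ is a sum of two nonzero quantities of the same sign; hence $\lambda_k \in \mathcal{D}(G)$.

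Next I would fix an arbitrary non-compact positive root $\alpha \in P \cap \Phi_n$ and write down the two parallel expansions
\begin{align*}
\langle \lambda_k - \delta^P,\, \alpha\rangle
&= \langle \lambda_1 - \delta^P,\, \alpha\rangle + k\langle \lambda_2,\, \alpha\rangle \\
&= \langle \lambda_1,\, \alpha\rangle + (k-1)\langle \lambda_2,\, \alpha\rangle + \langle \lambda_2 - \delta^P,\, \alpha\rangle.
\end{align*}
Since $\lambda_i \in \mathcal{D}(G)$ and $\alpha \in P = P^{\lambda_i}$, both $\langle \lambda_1, \alpha\rangle$ and $\langle \lambda_2, \alpha\rangle$ are strictly positive. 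If $\lambda_1 \in \mathcal{D}^{\star}(G)$, the first expansion shows $\langle \lambda_k - \delta^P, \alpha\rangle$ is the sum of a strictly positive term and a nonnegative term, hence positive for every $k \geq 0$. If instead $\lambda_2 \in \mathcal{D}^{\star}(G)$, the second expansion furnishes strict positivity for every $k \geq 1$, and the boundary value $k = 0$ reduces to $\lambda_1$ itself, which is handled by the first case. Either way, $\lambda_k$ satisfies the defining inequality of $\mathcal{D}^{\star}(G)$ for every non-compact positive root.

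There is no serious obstacle here: the lemma is essentially a two-line bilinear manipulation once the invariance of the positive system along the string has been recorded. The only thing to check with care is that the two expansions both rely only on the already-established equality $\delta^{\lambda_k} = \delta^P$, so that the difference $\lambda_k - \delta^{\lambda_k}$ can be expanded uniformly in $k$ without any $k$-dependent shift in the half-sum.
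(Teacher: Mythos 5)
Your argument follows the same route as the paper: first the sandwich inclusion $P^{(\mu_1)}\cap P^{(\mu_2)}\subset P^{(\mu_1+\mu_2)}\subset P^{(\mu_1)}\cup P^{(\mu_2)}$ to pin down the positive system along the string, then a linear expansion of $\langle \lambda_k-\delta^{P},\alpha\rangle$ for $\alpha\in P\cap\Phi_n$. In fact your write-up is slightly more careful than the paper's in one respect: the paper disposes of the two hypotheses with a ``without loss of generality, $\lambda_2\in\mathcal{D}^{\star}(G)$,'' even though base and direction play asymmetric roles, whereas you give the two parallel expansions explicitly, one adapted to each case. For $k\geq 1$ both of your expansions are correct and complete.

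The one genuine flaw is your treatment of $k=0$ in the second case. You assert that when only $\lambda_2\in\mathcal{D}^{\star}(G)$ the boundary term $\lambda_0=\lambda_1$ ``is handled by the first case'' --- but the first case is precisely the hypothesis $\lambda_1\in\mathcal{D}^{\star}(G)$, which is not available there, so the dismissal is circular. And the gap is not merely cosmetic: in $\mathrm{SL}(2,\R)$ take $\lambda_1=\tfrac{1}{2}\alpha$ and $\lambda_2=\alpha$; then $P^{\lambda_1}=P^{\lambda_2}=\{\alpha\}$ and $\lambda_2\in\mathcal{D}^{\star}(G)$, yet $\langle\lambda_1-\delta^{\lambda_1},\alpha\rangle=0$, so the $k=0$ member of the string lies in $\mathcal{D}(G)\setminus\mathcal{D}^{\star}(G)$. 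The honest conclusion in that case is only that $\lambda_1+k\lambda_2\in\mathcal{D}^{\star}(G)$ for all $k\in\N$. You should be aware that the paper's own proof has exactly the same restriction (it silently works with $k\in\N$ and never addresses $\lambda_0$), so this is a defect of the lemma's statement rather than a failure of your method; but since you explicitly claimed to cover $k=0$, the claim as written is not justified.
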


\begin{proof}
Indeed, $\lambda_1 + k \lambda_2 \in \mathcal{D}(G)$ for any $k \in \N$ and $P^{(\lambda_1)} \cap P^{(k \lambda_2)} \subset P^{(\lambda_1 + k \lambda_2)} \subset P^{(\lambda_1)} \cup P^{(k \lambda_2)}$. Therefore all integral linear forms of the string determine the same positive system as the one determined by the base and the direction of the string. Without loss of generality, we assume that $\lambda_2 \in \mathcal{D}^{\star}(G)$. Let us write $P_n^{\lambda} =P^{\lambda} \cap \Phi_n$. Then for any $\alpha \in P_n^{( \lambda_1 + k \lambda_2)} = P_n^{(k \lambda_2)} = P_n^{( \lambda_1)} \subset P^{(\lambda_1)}$, we have
$$ \langle \lambda_1 + k \lambda_2 - \delta^{(\lambda_1 + k \lambda_2)}, \alpha \rangle = \langle \lambda_1 , \alpha \rangle + \langle k \lambda_2 - \delta^{(k \lambda_2)} , \alpha \rangle > 0. $$ 

\noindent Therefore, every member of the string $\mathcal{S}(\lambda_1, \lambda_2)$ belongs to $\mathcal{D}^{\star}(G)$.
\end{proof}

\begin{proposition}
\textit{Let $\pi_{\lambda}$ be the discrete series representation with Harish-Chandra parameter $\lambda \in \mathcal{D}(G)$. Assume that $\lambda_1, \lambda_2 \in \mathcal{D}(G)$ with $P^{\lambda_1}=P^{\lambda_2}$. Then $\{ \pi_{\lambda_1 + k \lambda_2} : k \in \N \}$ is an infinite family of inequivalent discrete series representations of $G$.}
\end{proposition}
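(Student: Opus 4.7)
The plan is to reduce the statement to Harish-Chandra's parametrization theorem (Thm.~\ref{discreteseries}) together with the remark immediately following it, which asserts that distinct elements of $\mathcal{D}(G)$ correspond to inequivalent discrete series representations of $G$. Hence it suffices to verify two claims: that every element $\lambda_1 + k\lambda_2$ of the string lies in $\mathcal{D}(G)$, and that these elements are pairwise distinct in $\mathfrak{t}^*$.

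For the first claim, I would check the three defining properties of $\mathcal{D}(G)$ for $\mu_k := \lambda_1 + k\lambda_2$. Integrality of $\mu_k - \delta$ follows by writing
\[
\mu_k - \delta = (\lambda_1 - \delta) + k(\lambda_2 - \delta) + k\delta,
\]
observing that the first two summands pair integrally with every coroot by assumption, while $\langle \delta, \alpha^{\vee}\rangle \in \mathbb{Z}$ for all $\alpha \in \Phi$ (a standard property of the half-sum of positive roots). Regularity and positivity on the compact chamber are immediate from the hypothesis $P^{\lambda_1} = P^{\lambda_2}$: for any $\alpha \in \Phi$ the numbers $\langle \lambda_1, \alpha\rangle$ and $\langle \lambda_2, \alpha\rangle$ are nonzero and of the same sign, so their positive combination $\langle \mu_k, \alpha\rangle$ is nonzero for every $k \in \mathbb{N}$, and in particular is strictly positive whenever $\alpha \in \Phi_c^+ \subset P^{\lambda_1}$.

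For the second claim, if $\mu_k = \mu_{k'}$ for $k \neq k'$, then $(k-k')\lambda_2 = 0$ forces $\lambda_2 = 0$, contradicting the regularity of $\lambda_2 \in \mathcal{D}(G)$. Hence the map $k \mapsto \mu_k$ is injective, producing infinitely many distinct Harish-Chandra parameters. Invoking the remark after Thm.~\ref{discreteseries}, the associated discrete series representations $\pi_{\mu_k}$ are pairwise inequivalent, completing the proof.

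There is no real obstacle here; the argument is essentially bookkeeping to confirm that the string operation preserves membership in $\mathcal{D}(G)$, after which the parametrization theorem does the work. The only point meriting a sentence of care is the integrality verification, which uses the fact that $\delta$ is coroot-integral (a property which holds in the simply connected setting assumed throughout the paper).
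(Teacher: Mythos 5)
Your proof is correct and follows essentially the same route as the paper's: both arguments come down to checking that the forms $\lambda_1+k\lambda_2$ are pairwise distinct members of $\mathcal{D}(G)$ (using $\Phi_c^+\subset P^{\lambda_1}=P^{\lambda_2}$ to get strict positivity on $\Phi_c^+$) and then invoking the fact that no two such $\Phi_c^+$-dominant regular forms are $W_c$-conjugate. The only cosmetic differences are that you cite the remark after Thm.~\ref{discreteseries} where the paper re-derives the non-conjugacy from the length of $w\in W_c$, and you additionally spell out the (routine) integrality check.
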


\begin{proof}
We know that two discrete series representations $\pi_{\lambda_1}$ and $\pi_{\lambda_2}$ are equivalent if and only if $\lambda_1$ and $\lambda_2$ are $W_c$-conjugate. We  need to show that for two distinct positive integers $k_1$ and $k_2$, the linear forms $\lambda_1 + k_1 \lambda_2$ and $\lambda_1 + k_2 \lambda_2$ are not $W_c$-conjugate. 

\noindent If $w \in W_c$ and $\mu \in \mathcal{D}(G)$ such that $\langle \mu , \alpha \rangle > 0\ \text{for all}\ \alpha \in \Phi_c^+$, then by the definition of the length of $w$ there exists $\alpha \in \Phi_c^+$ (which depends on $w$) such that $\langle w \mu , \alpha \rangle < 0$. In other words, if $\mu_1, \mu_2 \in \mathcal{D}(G)$ with $\langle \mu_1 , \alpha \rangle > 0$ and $ \langle \mu_2, \alpha \rangle > 0\ \text{for all}\ \alpha \in \Phi_c^+$ then $\mu_1$ and $\mu_2$ are \textit{not} $W_c$-conjugate. 

\noindent Thus it is enough to show that $\langle \lambda_1 + k \lambda_2 , \alpha \rangle > 0\ \text{for all}\ \alpha \in \Phi_c^+$. For any $\alpha \in \Phi_c^+$, we have $\langle \lambda_1 + k \lambda_2, \alpha \rangle = \langle \lambda_1 , \alpha \rangle + k \langle \lambda_2, \alpha \rangle $. Now $\lambda_1, \lambda_2 \in \mathcal{D}(G)$ implies that $\Phi_c^+ \subset P^{(\lambda_2)} = P^{(k \lambda_2)} = P^{(\lambda_1)}$. Therefore, $\langle \lambda_1 , \alpha \rangle > 0$ and $\langle \lambda_2, \alpha \rangle > 0\ \text{for all}\ \alpha \in \Phi_c^+$. Hence the proof follows.  
\end{proof}

\begin{remark}
\textit{ In fact the above proposition shows that no two linear forms in a string give rise to equivalent discrete series representations. In other words, each member of a string determines a discrete series representation which is inequivalent to the rest.}
\end{remark}

\section{Generating Function for a String}\label{sec-3}

\noindent For two Harish-Chandra parameters $\lambda_1, \lambda_2$, let $\mathcal{S}( \lambda_1 , \lambda_2)$ be a string such that the base $\lambda_1$ or the direction $\lambda_2$ belongs to $\mathcal{D}^{\star}(G)$. For this string, we formally define the generating function in complex variable $z$ as 
$$F_{\lambda_1, \lambda_2, \Gamma} (z) : = \sum\limits_{k \in \N} m(\pi_k , \Gamma) z^{k-1},$$

\noindent where $\pi_k = \pi_{\lambda_1 + k \lambda_2 }$ with $k \in \N$. Let $\lambda_k = \lambda_1 + k \lambda_2$ and $w_0 \in W$ such that $w_0 \Phi^+ = P^{\lambda_k}=P^{\lambda_1}=P^{\lambda_2}$. Then by Thm. \ref{thm4}, the generating function $F_{\lambda_1, \lambda_2, \Gamma} (z)$ can be rewritten as $$ F_{\lambda_1, \lambda_2, \Gamma}(z) = \sum\limits_{k \in \N} (-1)^{l(w_0)} \chi(\lambda_k, \Gamma) z ^{k-1} = (-1)^{l(w_0)} \sum\limits_{k \in \N} \chi(\lambda_k, \Gamma) z^{k-1}.$$

\noindent The number of $\Gamma$-conjugacy classes in the set $\Gamma \cap G_e$ of elliptic elements in $\Gamma$ is finite. We fix a (finite) set of representatives $\{y_i\}$ of these classes. The least common multiple of the order of the elements $\{y_i\}_i$ is called the maximal order of the elliptic elements in $\Gamma$, and it is denoted by $N_{\Gamma}$.

\begin{proposition}\label{propgen}
\textit{Let $N_{\Gamma}$ be the maximal order of elliptic elements in $\Gamma$. Let $\mathcal{S}(\lambda_1, \lambda_2)$ be a string with $P^{\lambda_1}=P^{\lambda_2}$, and $F_{\lambda_1, \lambda_2, \Gamma}$ be its associated generating function in the complex variable $z$. Then there exists a complex polynomial $p_{\lambda_1, \lambda_2, \Gamma}$ with degree less than $N_{\Gamma} ( |\Phi^+| +1)$ such that 
$$ F_{\lambda_1, \lambda_2, \Gamma}(z) = \frac{p_{\lambda_1, \lambda_2, \Gamma}(z)}{(1 -z^{N_{\Gamma}})^{|\Phi^+| +1}}.$$ 
In particular, $F_{\lambda_1, \lambda_2, \Gamma}(z)$ is a complex rational function whose poles are the $N_{\Gamma}$-th roots of unity, each of order $|\Phi^+| +1$.} 
\end{proposition}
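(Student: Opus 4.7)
The plan is to apply the Hotta--Parthasarathy geometric formula (Thm. \ref{thm6}) to each coefficient $m(\pi_{\lambda_k},\Gamma)$ and show that the sequence $k \mapsto m(\pi_{\lambda_k},\Gamma)$ is a quasipolynomial of period dividing $N_\Gamma$ and degree at most $|\Phi^+|$; the rationality and denominator claim will then follow from a standard generating-function identity.

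First, I would fix a representative $y$ of a $\Gamma$-conjugacy class of elliptic elements in $\Gamma$, and analyse how $\Psi_{\lambda_k}(y)$ depends on $k$, where $\lambda_k=\lambda_1+k\lambda_2$. In the formula for $\Psi_{\lambda_k}^0(y)$, the denominator $\prod_{\alpha\in\Phi^+\setminus\Phi_y^+}(1-e^{-\alpha}(y))$ is a nonzero constant independent of $k$, and the outer constants $(-1)^{(d+d_y)/2}\prod_\alpha(\langle\delta_y,\alpha\rangle)^{-1}/(|W_y|[G_y:G_y^0])$ are also independent of $k$. For each $w\in W_c/W_y$, the factor $\prod_{\alpha\in\Phi_y^+}\langle w\lambda_k,\alpha\rangle=\prod_{\alpha\in\Phi_y^+}\bigl(\langle w\lambda_1,\alpha\rangle+k\langle w\lambda_2,\alpha\rangle\bigr)$ is a polynomial in $k$ of degree $|\Phi_y^+|\leq|\Phi^+|$, and the character $e^{w\lambda_k-\delta}(y)=e^{w\lambda_1-\delta}(y)\cdot\bigl(e^{w\lambda_2}(y)\bigr)^k$ factors as a constant times $\zeta_{y,w}^k$, where $\zeta_{y,w}:=e^{w\lambda_2}(y)$ is a root of unity of order dividing $N_\Gamma$ (because $y$ has finite order dividing $N_\Gamma$ and $w\lambda_2$ is an integral weight).

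Summing over $w\in W_c/W_y$, over the finitely many classes $[y]$, and multiplying by the volumes $\mathrm{vol}(\Gamma_y\backslash G_y)$, Thm. \ref{thm6} yields an expression of the form
\begin{equation*}
m(\pi_{\lambda_k},\Gamma)\;=\;\sum_{\zeta^{N_\Gamma}=1} P_\zeta(k)\,\zeta^k,
\end{equation*}
where each $P_\zeta\in\C[k]$ has degree at most $|\Phi^+|$. Substituting into $F_{\lambda_1,\lambda_2,\Gamma}(z)=\sum_{k\ge 1}m(\pi_{\lambda_k},\Gamma)z^{k-1}$ and applying the standard identity $\sum_{k\ge 0}k^n u^k=N_n(u)/(1-u)^{n+1}$ with $\deg N_n\le n$, I get, for each $N_\Gamma$-th root of unity $\zeta$, a contribution of the form $A_\zeta(z)/(1-\zeta z)^{|\Phi^+|+1}$ with $\deg A_\zeta\leq|\Phi^+|$.

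Finally, using the factorisation $\prod_{\zeta^{N_\Gamma}=1}(1-\zeta z)=1-z^{N_\Gamma}$, I combine the terms over the common denominator $(1-z^{N_\Gamma})^{|\Phi^+|+1}$, of degree $N_\Gamma(|\Phi^+|+1)$. Each summand $A_\zeta(z)/(1-\zeta z)^{|\Phi^+|+1}$ becomes $A_\zeta(z)\,\prod_{\eta\neq\zeta}(1-\eta z)^{|\Phi^+|+1}$ over the common denominator; its numerator has degree at most $|\Phi^+|+(N_\Gamma-1)(|\Phi^+|+1)=N_\Gamma(|\Phi^+|+1)-1$, which is strictly less than $N_\Gamma(|\Phi^+|+1)$. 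Thus the total numerator $p_{\lambda_1,\lambda_2,\Gamma}(z)$ has degree $<N_\Gamma(|\Phi^+|+1)$, proving the proposition. The main bookkeeping obstacle is ensuring the degree of the numerator is strict (which forces the $+1$ in the exponent of the denominator) and verifying that the index shift $z^{k-1}$ in the definition of $F$ does not perturb this bound; both follow from $\deg A_\zeta\leq|\Phi^+|<|\Phi^+|+1$.
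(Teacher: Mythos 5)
Your proposal is correct and follows essentially the same route as the paper: both apply Thm.~\ref{thm6} to see that $k\mapsto m(\pi_{\lambda_k},\Gamma)$ is a finite sum of terms $P(k)\,\zeta^k$ with $\deg P\le|\Phi_y^+|\le|\Phi^+|$ and $\zeta=e^{w\lambda_2}(y)$ an $N_\Gamma$-th root of unity, then invoke the identity $\sum_k k^j u^k = p_j(u)/(1-u)^{j+1}$ and clear to the common denominator $(1-z^{N_\Gamma})^{|\Phi^+|+1}$ with the same degree count. The only cosmetic difference is that you group terms by the root of unity $\zeta$ while the paper keeps the sum indexed by pairs $(y,w)$.
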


\begin{proof}
 By Thm. \ref{thm6}, we have 
\begin{equation}
\begin{split}
 F_{\lambda_1, \lambda_2, \Gamma}(z)
& = \sum\limits_{k \in \N}\ \sum\limits_{[y]} \text{vol} ( \Gamma_y \backslash G_y) \Psi_{\lambda_k}(y) z^{k-1}\\ 
& = \sum\limits_{[y]} \text{vol} (\Gamma_y \backslash G_y) \sum\limits_{k \in \N} \Psi_{\lambda_k} (y) z^{k-1},\\ 
\end{split}
\end{equation}

\noindent where $[y]$ runs over the $\Gamma$-conjugacy classes of elliptic elements in $\Gamma$. Now by definition of $\Psi_{\lambda_k}(y)$, we get 
\begin{equation}
\begin{split}
F_{\lambda_1, \lambda_2, \Gamma}(z)
& = \sum\limits_{[y]} \text{vol} (\Gamma_y \backslash G_y) \sum\limits_{k \in \N} \dfrac{ (-1)^{\tfrac{d + d_y}{2}} \prod\limits_{\alpha \in \Phi_y^+} \left(\langle \delta_y, \alpha \rangle \right) ^{-1} \Psi_{\lambda_k}^0(y)}{|W_y| [G_y : G_y^0]} z^{k-1}\\
& = \sum\limits_{[y]} \dfrac{\text{vol} (\Gamma_y \backslash G_y) (-1)^{\tfrac{d + d_y}{2}}}{|W_y| [G_y : G_y^0]} \sum\limits_{k \in \N} \prod\limits_{\alpha \in \Phi_y^+} \left(\langle \delta_y , \alpha \rangle\right) ^{-1} \Psi_{\lambda_k}^0(y) z^{k-1}.\\
\end{split}
\end{equation}

\noindent Let us denote $\dfrac{\text{vol}(\Gamma_y \backslash G_y) (-1)^{\frac{d+ d_y}{2}}}{|W_y| [ G_y : G_y^0]}$ by $C_y$. Finally, from the definition of $\Psi_{\lambda_k}^0(y)$, the generating function $F_{\lambda_1, \lambda_2, \Gamma}(z)$ equals

$$\sum\limits_{[y]} C_y \sum\limits_{k \in \N} \prod_{\alpha \in \Phi_y^+} (\langle \delta_y , \alpha \rangle) ^{-1} \dfrac{\sum\limits_{w \in W_c / W_y} (-1)^{l(w)} e ^{w \lambda_k -\delta}(y) \prod\limits_{\alpha \in \Phi_y^+} \langle w \lambda_k, \alpha \rangle}{\prod\limits_{\alpha \in \Phi^+ \setminus \Phi_y^+} (1 - e^{-\alpha}(y))}z^{k-1}.$$ 

\noindent We write this expression as 

$$F_{\lambda_1, \lambda_2, \Gamma}(z)= \sum\limits_{[y]} C_y \prod\limits_{\alpha \in \Phi^+ \setminus \Phi_y^+}(1 - e ^{-\alpha}(y))^{-1} \sum\limits_{w \in W_c / W_y} (-1)^{l(w)} a_{y, w}(z),$$

\noindent where $a_{y,w} (z) = \sum\limits_{k \in \N} e ^{w \lambda_k -\delta }(y) \prod\limits_{\alpha \in \Phi_y^+} \dfrac{\langle w \lambda_k , \alpha \rangle }{\langle \delta_y , \alpha \rangle} z^{k-1}$. 
 Therefore, it is enough to show that $a_{y,w}(z)$ is a rational function in $z$. 
 
\noindent We observe that
 \begin{equation}
 \begin{split}
 \prod\limits_{\alpha \in \Phi_y^+} \dfrac{\langle w \lambda_k , \alpha \rangle }{ \langle \delta_y, \alpha \rangle }
 & = \prod\limits_{\alpha \in \Phi_y^+} \dfrac{1}{\langle \delta_y, \alpha \rangle } \prod\limits_{\alpha \in \Phi_y^+} \langle w (\lambda_1 + k \lambda_2 ) , \alpha \rangle \\
 & = \prod\limits_{\alpha \in \Phi_y^+} \dfrac{1}{\langle \delta_y, \alpha \rangle} \prod\limits_{\alpha \in \Phi_y^+} ( \langle w \lambda_1, \alpha \rangle + \langle k w \lambda_2, \alpha \rangle )\\
 & = \prod\limits_{\alpha \in \Phi_y^+} \dfrac{1}{\langle \delta_y, \alpha \rangle} \prod\limits_{\alpha \in \Phi_y^+}( \langle w \lambda_1 , \alpha \rangle + k \langle w \lambda_2, \alpha \rangle ).\\
\end{split}
\end{equation} 

\noindent Therefore, $\prod\limits_{\alpha \in \Phi_y^+} \dfrac{\langle w \lambda_k , \alpha \rangle }{\langle \delta_y , \alpha \rangle}$ is a polynomial in $k$ of degree at most $|\Phi_y^+|$, and let us write it as $\sum\limits_{j=0}^{|\Phi_y^+|} b_j k^j$ for some complex numbers $b_j$.

\noindent Then $$a_{y,w}(z)=\sum\limits_{k \in \N} e^{w \lambda_1 - \delta}(y) e^{k w \lambda_2} (y) \sum\limits_{j=0}^{|\Phi_y^+|} b_j k^j z^{k-1}.$$

\noindent For the sake of simplicity, we show that $z . a_{y,w}(z)= e^{w \lambda_1-\delta}(y) \sum\limits_{j=0}^{|\Phi_y^+|} b_j \sum\limits_{k \in \N} k ^j (e^{w \lambda_2}(y) z )^k$ is a rational function in $z$.

\noindent We first claim that for any $0 \leq j \leq |\Phi_y^+|$, 
\vspace{2pt}
\begin{equation}\label{claim}
\begin{split}
\sum\limits_{k \in \N} k^j (e ^{w \lambda_2}(y) z )^k 
& = \dfrac{p_j(z)}{(1-e^{w \lambda_2}(y)z)^{|\Phi_y^+| +1}}
\end{split}
\end{equation}

\noindent for some polynomial $p_j(z)$ of degree at most $|\Phi_y^+|$.

\noindent We will show claim \ref{claim} by induction on $j$. Clearly, it is true for $j=0$. Assume that it is true for $\{1,2, \dots , j-1\}$. Observe that $j ! { k+j \choose k} = k^j + \sum\limits_{l=0}^{j-1} c_{\ell} k^j\ \text{for some}\ c_{\ell} \in \Z$. Using this we have, 
$$\sum\limits_{k \in \N}k^j x^k = \dfrac{j!}{(1-x)^{j+1}} - \sum\limits_{l=0}^{j-1}c_{\ell} \sum\limits_{k \in \N} k^{\ell} x^k.$$

\noindent Since $(1-x)^{-(j+1)}=\dfrac{(1-x)^{|\Phi^+_y|-j}}{(1-x)^{|\Phi^+_y|+1}}$, the claim \ref{claim} follows by substituting $x=e^{w \lambda_2}(y) z$.

\vspace{8pt} 

\noindent We write $$p_{y,w}(z)=e^{w \lambda_1 - \delta}(y)\sum\limits_{j=0}^{|\Phi^+_y|} b_j p_j (z).$$ 

\noindent Then by the claim \ref{claim}, we have 

$$z. a_{y,w}(z)=\frac{p_{y,w}(z)}{(1 - e^{w \lambda_2}(y) z)^{|\Phi_y^+|+1}}.$$

\noindent By the definition of $N_{\Gamma}$,  $e^{w \lambda_2}(y)$ is a $N_{\Gamma}$-th root of unity (not necessarily primitive). Therefore, we can write  

$$ \frac{p_{y,w}(z)}{(1 - e^{w \lambda_2}(y) z)^{|\Phi_y^+|+1}} = \frac{p_{y,w}(z) (1-z^{N_{\Gamma}})^{|\Phi^+|-|\Phi_y^+|} \prod\limits_{\xi^{N_{\Gamma}}=1, \xi \neq e^{w \lambda_2}(y)}(1-\xi z)^{|\Phi_y^+| +1}}{(1 - z^{N_{\Gamma}})^{|\Phi^+|+1}}.$$

\noindent Now $|\Phi^+| \geq |\Phi_y^+|$. Hence, the degree of the numerator of the above expression is less than or equal to $|\Phi_y^+| + N_{\Gamma} (|\Phi^+| - |\Phi_y^+| )+ (N_{\Gamma}-1) (|\Phi_y^+| +1) < N_{\Gamma} (|\Phi^+| +1).$
\end{proof}

\begin{remark}
\textit{The Prop. \ref{propgen} also implies that for a given string $\mathcal{S}(\lambda_1, \lambda_2)$ with $P^{\lambda_1}=P^{\lambda_2}$, and $\lambda_1$ or $\lambda_2$ belonging to $\mathcal{D}^{\star}(G)$, there are infinitely many discrete series representations $\pi_{\lambda}$ with Harish-Chandra parameters $\lambda \in \mathcal{S}(\lambda_1, \lambda_2)$ such that $\pi_{\lambda}$ occurs as a subrepresentation in $L^2(\Gamma \backslash G)$ with non-zero multiplicity.} 
\end{remark}

\begin{remark}
\textit{Note that for a given string, the generating function can be defined by simply taking the coefficient to be the Lefschetz number instead of the multiplicity of the discrete series. In that case, the proposition holds without assuming that the base or the direction of the string belongs to $\mathcal{D}^{\star}(G)$. Since we are concerned with only the multiplicity of discrete series, we need the Harish-Chandra parameters in $\mathcal{D}^{\star}(G)$ which is sufficient to relate the multiplicities and Lefschetz numbers up to sign (e.g. Thm. \ref{thm4}).}
\end{remark}

\section{Proof of the Main Results}

\subsection{Proof of Thm. \ref{thm1}}\label{proofthm1}

For simplicity, we write $N:=N_{\Gamma}$. We have the string $\mathcal{S}(\lambda_1, \lambda_2)$ with base $\lambda_1$, direction $\lambda_2$ and $P^{\lambda_1}=P^{\lambda_2}$; and by Prop. \ref{propgen} its generating function $F_{\lambda_1, \lambda_2, \Gamma }$ is of the form 
$$F_{\lambda_1, \lambda_2, \Gamma } (z) = \frac{p(z)}{(1-z^{N})^{|\Phi^+|+1}},$$ 
with deg $p(z) < N(|\Phi^+|+1)$. Let us write $p(z) = \sum\limits_{k=0}^{N(|\Phi^+|+1)-1} b_k z^k$ for some $b_k \in \C$. We claim that for any $0 \leq j \leq N-1$ and $m \geq0$, 

$$m(\pi_{\lambda_{mN+j}},\Gamma) = \sum\limits_{h=0}^{|\Phi^+|} b_{hN+j} \left( \begin{matrix} m-h+|\Phi^+| \\ |\Phi^+| \end{matrix} \right),$$ 

\noindent where $ \left( \begin{matrix} m-h+|\Phi^+| \\ |\Phi^+| \end{matrix} \right) =0\ \text{when}\ m<h$. Therefore, the sum essentially runs over $0 \leq h \leq \text{min}(|\Phi^+|,m)$. We use the following binomial expansion: $$\frac{1}{(1-y)^{j+1}} = \sum_{k \geq 0} \left( \begin{matrix} k+j \\ k \end{matrix} \right) y^k \hspace{10pt} \text{for any}\ j \in \N.$$ 

\noindent Then, we have 
\begin{equation*}
\begin{split}
 F_{\lambda_1,\lambda_2,\Gamma} (z) & = p(z) (1-z^{N})^{-(|\Phi^+|+1)} \\
& = \left(\sum_{j=0}^{N-1} \sum_{h=0}^{|\Phi^+|} b_{hN+j} z^{hN+j} \right) \left( \sum_{k \geq0} \left( \begin{matrix} k+ |\Phi^+| \\ |\Phi^+| \end{matrix} \right) z^{kN} \right) \\
& = \sum_{j=0}^{N-1} \sum_{k\geq0} \sum_{h=0}^{|\Phi^+|} b_{hN+j} \left( \begin{matrix} k+ |\Phi^+| \\ |\Phi^+| \end{matrix} \right) z^{(k+h)N +j} \\
& = \sum_{j=0}^{N-1} \sum_{m\geq0} \left( \sum_{h=0}^{|\Phi^+|} b_{hN+j} \left( \begin{matrix} m-h + |\Phi^+| \\ |\Phi^+| \end{matrix} \right) \right) z^{mN+j}.
\end{split}
\end{equation*}
Let us fix a $j$ with $0 \leq j \leq N-1$. Since $|\mathcal{A} \cap (j+N \Z)| \geq |\Phi^+|+1$, there are $m_0,\dots ,m_{|\Phi^+|} \in \Z^+$ such that $m_i N+j \in \mathcal{A}$ for all $0 \leq i \leq |\Phi^+|$.

\noindent Therefore, 
$$ m(\pi_{\lambda_{m_i N +j}},\Gamma) = \sum_{h=0}^{|\Phi^+|} b_{hN+j}  \left( \begin{matrix} m_i - h + |\Phi^+| \\ |\Phi^+| \end{matrix} \right)\hspace{15pt}  \text{with }\ 0 \leq i \leq |\Phi^+| $$ 
is a system of $|\Phi^+| +1$ many linear equations in $|\Phi^+| +1$ many variables $\left\{ b_{hN+j} \right\}_{h=0}^{|\Phi^+|}$. It is not difficult to check that the square matrix 

$$ \Biggl \{ { m_i - h + |\Phi^+ | \choose |\Phi^+| }\Biggr \}  _{0 ~\leq~ h,~ i ~\leq ~|\Phi^+|}$$ of order $|\Phi^+| + 1$ is non-singular. Hence, it has a unique solution. Consequently, for any $0 \leq h \leq |\Phi^+|$, $b_{hN+j}=\sum\limits_i n(hN+j, m_iN+j)\ m(\pi_{\lambda_{m_i N +j}},\Gamma)$ for some integers $n(hN+j, m_iN+j)$ independent of $\Gamma$. In other words, the solution $\left\{ b_{hN+j} \right\}_{h=0}^{|\Phi^+|}$ can be expressed linearly in terms of $\{ m(\pi_{\lambda_{m_i N +j}},\Gamma)$:\ $m_iN+j \in \mathcal{A}\} $.

\vspace{5pt}

\noindent Since the finite set $\mathcal{A}$ satisfies the condition $|\mathcal{A} \cap (j+N \Z)| \geq |\Phi^+|+1$ for all $j \in \left\{ 0,1, \dots ,N-1 \right\}$, we have  $ b_{\ell} = \sum\limits_{k \in \mathcal{A}} n(\ell, k)\  m(\pi_{\lambda_k},\Gamma)$ for all $0 \leq \ell \leq N(|\Phi^+|+1)$. This implies that the polynomial $p(z)$ and hence the generating function $F_{\lambda_1, \lambda_2, \Gamma}$ is determined by the same. Therefore for any $\ell \geq 0$, the discrete series multiplicity $m(\pi_{\lambda_{\ell}},\Gamma)=\sum\limits_{k \in \mathcal{A}} n(\ell, k)\ m(\pi_{\lambda_k},\Gamma)$ for some integers $n(\ell, k)$ independent of $\Gamma$. 

\begin{remark}
\textit{Thm. \ref{thm1} implies the set of multiplicities of infinitely many discrete series representations parametrized by the string $\mathcal{S}(\lambda_1 , \lambda_2)$ is determined by a finite subset of multiplicities with cardinality $N (|\Phi^+| +1) $. } 
\end{remark}

\subsection{Proof of Cor. \ref{cor2}}\label{proofcor2}
 
Let $\mathcal{A}= \left\{k \in \N \cup \{0\} : m(\pi_{\lambda_k},\Gamma_1)=m(\pi_{\lambda_k},\Gamma_2) \right\}$. We need to show that $\mathcal{A}$ satisfies 
$$|\mathcal{A} \cap (N \Z +j)| \geq |\Phi^+|+1 \hspace{10pt} \text{for all} \hspace{5pt} 0 \leq j \leq N-1.$$
By the hypothesis, we have $$ \liminf\limits_{t \rightarrow \infty} \frac{| \left\{ 0 \leq k \leq t : k \in \mathcal{A} \right\}|}{t} > \frac{N-1}{N}.$$
$ \text{This means, for every}~\epsilon >0 ~\text{there is}~ r_0 >0~ \text{such that}~$  $$\frac{|\left\{ k \in \mathcal{A} : k<rN(|\Phi^+|+1) \right\}|}{rN(|\Phi^+|+1)} > \frac{N-1+\epsilon}{N} \hspace{10pt} \text{for all}~r \geq r_0.$$ 
Equivalently,
$$| \left\{ k \in \mathcal{A} : k<rN(|\Phi^+|+1) \right\}| > r(N-1+\epsilon) (|\Phi^+|+1).$$
Let $r \geq r_0$. Fix a $j_0$ such that $0 \leq j_0 \leq N-1$.
Then the cardinality
\begin{equation*}
\begin{split}
| \left\{ k \in \mathcal{A} : k<rN(|\Phi^+|+1) \right\}| &= \sum_{j=0}^{N-1} |\left\{ k \in \mathcal{A} : k<rN(|\Phi^+|+1) \right\} \cap (j+N \Z)|\\
&\leq |\mathcal{A} \cap (j_0 + N\Z)| + (N-1)r (|\Phi^+|+1).
\end{split}
\end{equation*}
In other words, $$|\mathcal{A} \cap (j_0 + N\Z)|  > r \epsilon (|\Phi^+|+1).$$
This holds for all $r \geq r_0$. In particular for $r \geq \frac{1}{\epsilon}$, we have $\mathcal{A} \cap (j_0+ N\Z) \geq (|\Phi^+|+1)$.
This holds for all $j_0 \in \left\{ 0,1, \dots ,N-1 \right\}$.

\subsection{Exhaustion of Harish-Chandra parameters in $\mathcal{D}^{\star}(G)$}\label{exhaustion}
In this subsection, we describe the notion of a multi-directed string of Harish-Chandra parameters and prove analogous result to Thm. \ref{thm1} in this generality. Let us denote the cartesian product $\N \times \N \times \dots \times \N$ of $n$ copies of $\N$ by $\N^n$.

\begin{definition}\label{multi-directed}
Given a finite set of linear forms $\lambda_1, \lambda_2,\dots ,\lambda_n \in \mathcal{D}(G)$, the infinite set $\mathcal{S}(\lambda_1, \lambda_2, \dots, \lambda_n)$ $:=\{ k_1 \lambda_1 + k_2 \lambda_2 + \dots + k_n \lambda_n : (k_1, k_2, \dots ,k_n) \in \N^n \setminus \{(0,0, \dots ,0)\} \}$ is called a multi-directed string with directions $\lambda_1, \lambda_2, \dots, \lambda_n$. For every $\boldsymbol{k}:= (k_1, k_2, \dots ,k_n) \in \N^n \setminus \{(0,0, \dots ,0)\} $, the linear form $\sum\limits_{i=1}^{n} k_i \lambda_i$ is denoted by $\lambda_{\boldsymbol{k}}$. 
\end{definition}

\noindent It is not difficult to see the following generalisation of Lem. \ref{lemma2.4.3}: 
\vspace{2pt}
\begin{lemma}\label{lem-grid}

\textit{Assume $\lambda_1, \lambda_2, \dots , \lambda_n \in \mathcal{D}(G)$ such that $P^{\lambda_r}=P^{\lambda_s}$ for all $1 \leq r, s \leq n$, and $\lambda_1 \in \mathcal{D}^{\star}(G)$. Then every linear form in the multi-directed string $\mathcal{S}(\lambda_1, \lambda_2, \dots ,\lambda_n)$ belongs to $\mathcal{D}^{\star}(G)$.}

\end{lemma}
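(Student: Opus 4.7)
The plan is to imitate the proof of Lem. \ref{lemma2.4.3} with only minor bookkeeping to accommodate the multiple directions; the multi-directed case reduces cleanly to the single-direction computation once one isolates the special direction $\lambda_1$. First, I would verify that every $\lambda_{\boldsymbol{k}} = \sum_{i=1}^{n} k_i \lambda_i$ lies in $\mathcal{D}(G)$ with the common positive system $P^{\lambda_1}$. Since all $\lambda_i$ determine the same positive system, for each root $\alpha \in \Phi$ the sign of $\langle \lambda_i, \alpha \rangle$ is the same across $i$; thus a non-trivial non-negative integer combination preserves this sign pattern, giving regularity and $P^{\lambda_{\boldsymbol{k}}} = P^{\lambda_1}$. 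Integrality is automatic since the set $\mathcal{F}$ of integral linear forms is closed under non-negative integer combinations (modulo the usual translation by $\delta$, which is itself integral), and $\Phi_c^+ \subset P^{\lambda_1}$ ensures $\langle \lambda_{\boldsymbol{k}}, \alpha \rangle > 0$ for all $\alpha \in \Phi_c^+$.

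Next I would verify the defining inequality of $\mathcal{D}^{\star}(G)$. Since $P^{\lambda_{\boldsymbol{k}}} = P^{\lambda_1}$, we have $\delta^{\lambda_{\boldsymbol{k}}} = \delta^{\lambda_1}$. For any $\alpha \in P^{\lambda_{\boldsymbol{k}}} \cap \Phi_n$, I would isolate the $\lambda_1$ contribution by writing
\begin{equation*}
\langle \lambda_{\boldsymbol{k}} - \delta^{\lambda_{\boldsymbol{k}}}, \alpha \rangle
= \langle \lambda_1 - \delta^{\lambda_1}, \alpha \rangle + (k_1 - 1)\langle \lambda_1, \alpha \rangle + \sum_{i=2}^{n} k_i \langle \lambda_i, \alpha \rangle.
\end{equation*}
The first summand is strictly positive by the hypothesis $\lambda_1 \in \mathcal{D}^{\star}(G)$; the second is non-negative once $k_1 \geq 1$; and each term in the last sum is non-negative because $\alpha \in P^{\lambda_i}$ forces $\langle \lambda_i, \alpha \rangle > 0$. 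Summing, the whole expression is positive, so $\lambda_{\boldsymbol{k}} \in \mathcal{D}^{\star}(G)$.

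There is no real obstacle; the only subtlety is the convention for $\boldsymbol{k}$. If $\mathbb{N}$ here denotes $\{1,2,\dots\}$, then $k_1 \geq 1$ automatically and the computation above closes the argument. If instead the definition allows some coordinates to vanish (so that the exclusion of $(0,\dots,0)$ is non-vacuous), then by the symmetry of the hypothesis in the indexing of the distinguished direction $\lambda_1$, one may harmlessly relabel so that the distinguished direction carries a positive coefficient; alternatively, in the borderline case $k_1 = 0$ one may repeat the same decomposition after renaming. Either way, the argument is a direct repackaging of the single-direction case and invokes no new ideas.
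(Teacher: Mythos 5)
Your main computation is correct and is exactly the intended argument: the paper offers no proof of this lemma at all, merely asserting that it is the evident generalisation of Lem.~\ref{lemma2.4.3}, and your decomposition $\langle \lambda_{\boldsymbol{k}} - \delta^{\lambda_{\boldsymbol{k}}}, \alpha \rangle = \langle \lambda_1 - \delta^{\lambda_1}, \alpha \rangle + (k_1-1)\langle \lambda_1, \alpha\rangle + \sum_{i\geq 2} k_i \langle \lambda_i, \alpha\rangle$ is precisely the single-direction proof with the extra non-negative terms carried along. The preliminary checks (regularity, $P^{\lambda_{\boldsymbol{k}}}=P^{\lambda_1}$, hence $\delta^{\lambda_{\boldsymbol{k}}}=\delta^{\lambda_1}$, and integrality via the translation by $\delta$) are all fine.

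The one point to correct is your contingency for $k_1=0$. The hypothesis is \emph{not} symmetric in the indices: only $\lambda_1$ is assumed to lie in $\mathcal{D}^{\star}(G)$, so ``relabelling so that the distinguished direction carries a positive coefficient'' is not available. Worse, if coefficients were genuinely allowed to vanish the lemma itself would be false: taking $k_2=1$ and all other $k_i=0$ gives $\lambda_{\boldsymbol{k}}=\lambda_2$, which need only lie in $\mathcal{D}(G)$, not in $\mathcal{D}^{\star}(G)$ (for $\mathrm{SL}(2,\R)$ such $\lambda_2$ exist). So the only tenable reading of Def.~\ref{multi-directed} is that every $k_i\geq 1$ (making the exclusion of the zero tuple redundant), under which your first branch already closes the argument and the fallback should simply be deleted.
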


  Now we can generalise the Thm. \ref{thm1} for a multi-directed string. First, we need a few notations: Let $p_i : \N^n \longrightarrow \N$ be the projection onto $i$-th component of an $n$-tuple of natural numbers. For a subset $\boldsymbol{\mathcal{A}}$ of $\N^n$, let $\mathcal{A}_i$ denote the image of $\boldsymbol{\mathcal{A}}$ under $p_i$ for $1 \leq i \leq n$.

\begin{theorem}\label{cor-n-grid}
\textit{Assume the hypothesis on $G$ and $K$ as in Thm. \ref{thm1}. Let $N_{\Gamma}$ be the maximal order of elliptic elements in $\Gamma$. For given $\lambda_1, \lambda_2, \dots ,\lambda_n \in \mathcal{D}(G)$ such that $P^{\lambda_r}=P^{\lambda_s}$ for all $1 \leq r, s \leq n$ and at least one of the $\lambda_r$'s belongs to $\mathcal{D}^{\star}(G)$, consider the multi-directed string $\mathcal{S}(\lambda_1, \lambda_2, \dots ,\lambda_n)$. If $\boldsymbol{\mathcal{A}}$ is a finite subset of $\N^n$ such that each $\mathcal{A}_i$ satisfies $| \mathcal{A}_i  \cap (j + N_{\Gamma} \Z) | \geq |\Phi^+|+1$ for all $ 0 \leq j \leq N_{\Gamma}-1$, then for any  $\lambda_{\boldsymbol{k}} \in \mathcal{S}(\lambda_1, \lambda_2, \dots ,\lambda_n)$ the discrete series multiplicity $m(\pi_{\lambda_{\boldsymbol{k}}}, \Gamma)$ can be expressed linearly in terms of the finitely many discrete series multiplicities $\{ m(\pi_{\lambda_{\boldsymbol{k}}}, \Gamma) : \boldsymbol{k} \in \boldsymbol{\mathcal{A}\}}$.} 
\end{theorem}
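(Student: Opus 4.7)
The plan is to extend the generating-function method of Proposition \ref{propgen} and Theorem \ref{thm1} to several variables. By Lem. \ref{lem-grid} every $\lambda_{\boldsymbol{k}}$ in $\mathcal{S}(\lambda_1, \ldots, \lambda_n)$ lies in $\mathcal{D}^{\star}(G)$, so Thm. \ref{thm6} yields a geometric formula for each multiplicity $m(\pi_{\lambda_{\boldsymbol{k}}}, \Gamma)$. I would introduce the $n$-variable generating function
$$F_{\lambda_1, \ldots, \lambda_n, \Gamma}(z_1, \ldots, z_n) := \sum_{\boldsymbol{k} \in \N^n} m(\pi_{\lambda_{\boldsymbol{k}}}, \Gamma)\, z_1^{k_1} \cdots z_n^{k_n},$$
insert the geometric formula, and swap summations as in Prop. \ref{propgen}. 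For each $\Gamma$-conjugacy class $[y]$ of elliptic elements in $\Gamma$ and each $w \in W_c/W_y$, the inner $\boldsymbol{k}$-sum becomes, up to a constant, $\sum_{\boldsymbol{k}} Q_{y,w}(\boldsymbol{k}) \prod_{i=1}^n \bigl(e^{w\lambda_i}(y)\, z_i\bigr)^{k_i}$, where $Q_{y,w}(\boldsymbol{k}) = \prod_{\alpha \in \Phi_y^+} \langle w \lambda_{\boldsymbol{k}}, \alpha\rangle / \langle \delta_y, \alpha\rangle$ is a polynomial in $\boldsymbol{k}$ of degree at most $|\Phi_y^+|$ in each variable separately.

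Applying the univariate identity $\sum_k k^j x^k = p_j(x)(1-x)^{-(|\Phi_y^+|+1)}$ coordinate by coordinate, and exploiting that each $e^{w\lambda_i}(y)$ is an $N_\Gamma$-th root of unity so that $(1 - e^{w\lambda_i}(y) z_i)^{|\Phi_y^+|+1}$ divides $(1 - z_i^{N_\Gamma})^{|\Phi^+|+1}$, I would deduce
$$F_{\lambda_1, \ldots, \lambda_n, \Gamma}(z_1, \ldots, z_n) = \frac{P(z_1, \ldots, z_n)}{\prod_{i=1}^n (1 - z_i^{N_\Gamma})^{|\Phi^+|+1}},$$
where $P$ is a polynomial of degree strictly less than $N_\Gamma(|\Phi^+|+1)$ in each variable. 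Expanding the denominator via $(1-y)^{-(|\Phi^+|+1)} = \sum_\ell \binom{\ell + |\Phi^+|}{|\Phi^+|} y^\ell$ in every variable and writing $k_i = N_\Gamma m_i + j_i$ with $0 \leq j_i \leq N_\Gamma - 1$, matching coefficients gives, for every residue vector $\boldsymbol{j}$,
$$m(\pi_{\lambda_{\boldsymbol{k}}}, \Gamma) = \sum_{\boldsymbol{h} \in \{0, \ldots, |\Phi^+|\}^n} b_{N_\Gamma \boldsymbol{h} + \boldsymbol{j}} \prod_{i=1}^n \binom{m_i - h_i + |\Phi^+|}{|\Phi^+|},$$
where the $b_{\boldsymbol{a}}$ are the coefficients of $P$.

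For the final step, the hypothesis that each projection $\mathcal{A}_i$ meets every residue class modulo $N_\Gamma$ in at least $|\Phi^+|+1$ points would let me extract, for each $\boldsymbol{j}$, a product grid of $(|\Phi^+|+1)^n$ indices in $\boldsymbol{\mathcal{A}}$ congruent to $\boldsymbol{j}$ modulo $N_\Gamma$. Evaluating the identity above on such a grid produces a linear system whose coefficient matrix is the $n$-fold Kronecker product of the nonsingular univariate matrix $\bigl\{\binom{m_i - h + |\Phi^+|}{|\Phi^+|}\bigr\}$ that appeared in the proof of Thm. \ref{thm1}, hence itself nonsingular. Inverting it would express each $b_{N_\Gamma \boldsymbol{h} + \boldsymbol{j}}$ as an integer linear combination of $\{m(\pi_{\lambda_{\boldsymbol{k}}}, \Gamma) : \boldsymbol{k} \in \boldsymbol{\mathcal{A}}\}$, which then determines $P$ and thus every discrete series multiplicity in the string.

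The principal obstacle I anticipate is the combinatorial step of passing from the hypothesis on the one-dimensional projections $\mathcal{A}_i$ to the existence of a genuine product grid inside $\boldsymbol{\mathcal{A}}$: the cleanest reading is that $\boldsymbol{\mathcal{A}} \supseteq \mathcal{A}_1 \times \cdots \times \mathcal{A}_n$, but if one only has control on projections then the proper approach is to peel off coordinates one at a time and iterate Thm. \ref{thm1} $n$ times, which produces the same Kronecker-product structure phrased differently. A secondary technical point is verifying the uniform degree bound on $P$, since each $(y, w)$-summand contributes a denominator whose factorization into $N_\Gamma$-th roots of unity depends on $(y, w)$, and these must be cleared uniformly to yield the common denominator $\prod_i (1 - z_i^{N_\Gamma})^{|\Phi^+|+1}$.
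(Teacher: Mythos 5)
Your proposal is correct in substance, but it takes a genuinely different route from the paper: you build an $n$-variable generating function, prove a multivariate rationality statement with denominator $\prod_i(1-z_i^{N_\Gamma})^{|\Phi^+|+1}$, and invert a Kronecker product of the nonsingular univariate matrices, whereas the paper never leaves the one-variable world. Its proof reduces by induction to $n=2$ and simply applies Thm.~\ref{thm1} twice: first to the string $\mathcal{S}(t_1\lambda_1,\lambda_2)$ (base $t_1\lambda_1$, direction $\lambda_2$, using the hypothesis on $\mathcal{A}_2$) to write $m(\pi_{t_1\lambda_1+t_2\lambda_2},\Gamma)$ in terms of $\{m(\pi_{t_1\lambda_1+\ell\lambda_2},\Gamma):\ell\in\mathcal{A}_2\}$, and then, for each such $\ell$, to the string $\mathcal{S}(\ell\lambda_2,\lambda_1)$ (using the hypothesis on $\mathcal{A}_1$) — exactly the ``peel off coordinates one at a time'' iteration you mention in your last paragraph as the fallback. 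That iteration is cheaper: it needs no multivariate degree bookkeeping and no Kronecker-product nonsingularity argument, at the cost of not producing the coefficients in closed form; your version buys an explicit multivariate rational function and a one-shot linear system. The obstacle you flag is real and you diagnose it correctly: the hypothesis only controls the projections $\mathcal{A}_i$, so both arguments actually express $m(\pi_{\lambda_{\boldsymbol{k}}},\Gamma)$ as a linear combination over the product grid $\mathcal{A}_1\times\cdots\times\mathcal{A}_n$ rather than over $\boldsymbol{\mathcal{A}}$ itself; the paper's own proof concludes with precisely that grid, so this is a (minor) mismatch between the theorem's statement and its proof rather than a defect of your approach. One small point to tighten if you pursue the multivariate route: $Q_{y,w}(\boldsymbol{k})$ has \emph{total} degree at most $|\Phi_y^+|$, which is what you need when expanding into monomials $\prod_i k_i^{j_i}$ so that each exponent $j_i\le|\Phi_y^+|\le|\Phi^+|$ and the common denominator clears uniformly.
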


\begin{proof}

By Lem. \ref{lem-grid}, we have $\lambda_{\boldsymbol{k}} \in \mathcal{D}^{\star}(G)$ for every $\boldsymbol{k} \in \N^n \setminus \{(0,0, \dots ,0)\}$. By induction on $n$, we are reduced to the case for $n=2$. Let us fix $(t_1, t_2) \in \N^2$. We need to show that $m(\pi_{t_1 \lambda_1 + t_2 \lambda_2}, \Gamma)$ can be linearly expressed in terms of the finite set $\{ m(\pi_{\lambda_{\boldsymbol{k}}}, \Gamma) : \boldsymbol{k} \in \boldsymbol{\mathcal{A}}\}$. We have the string $\mathcal{S}(t_1 \lambda_1, \lambda_2)$ with base $t_1 \lambda_1$ and direction $\lambda_2$. Since $\mathcal{A}_2$ satisfies $| \mathcal{A}_2  \cap (j + N_{\Gamma} \Z) | \geq |\Phi^+|+1$ for all $ 0 \leq j \leq N_{\Gamma}-1$, by Thm. \ref{thm1} the multiplicity $m(\pi_{t_1 \lambda_1 + t_2 \lambda_2}, \Gamma)$ can be expressed linearly in terms of the finite set $\{ m(\pi_{\lambda_{\boldsymbol{k}}}, \Gamma) : p_1 (\boldsymbol{k})=t_1 , p_2 (\boldsymbol{k}) \in \mathcal{A}_2 \}.$

\noindent For any $\ell \in \mathcal{A}_2$, we have the string $\mathcal{S}(\ell \lambda_2, \lambda_1)$ with base $ \ell \lambda_2$ and direction $\lambda_1$. Since $\mathcal{A}_1$ satisfies $| \mathcal{A}_1  \cap (j + N_{\Gamma} \Z) | \geq |\Phi^+|+1$, again by Thm. \ref{thm1} the multiplicity $m(\pi_{ \ell \lambda_2 + t_1 \lambda_1}, \Gamma)$ can be expressed linearly in terms of the finite set $\{ m(\pi_{\lambda_{\boldsymbol{k}}}, \Gamma) : p_1 (\boldsymbol{k}) \in \mathcal{A}_1, p_2 (\boldsymbol{k})=\ell \}$. This holds for every $\ell \in \mathcal{A}_2$. Therefore, $m(\pi_{t_1 \lambda_1 + t_2 \lambda_2}, \Gamma)$ can be expressed linearly in terms of the finite set $\{ m(\pi_{\lambda_{\boldsymbol{k}}}, \Gamma) : p_1(\boldsymbol{k}) \in \mathcal{A}_1, p_2 (\boldsymbol{k}) \in \mathcal{A}_2\}$.
\end{proof}

\noindent It is tempting to ask whether there exists a finite set $\{\lambda_1, \lambda_2, \dots ,\lambda_n\} \subset \mathcal{D}(G)$, with the property $P^{\lambda_r}=P^{\lambda_s}$ for all $1 \leq r,s \leq n$ and at least one of the $\lambda_r$'s belongs to $\mathcal{D}^{\star}(G)$, so that the set quality $\{ \mu \in \mathcal{D}^{\star}(G): P^{\mu}=P^{\lambda_r} \}=\mathcal{S}(\lambda_1, \lambda_2, \dots ,\lambda_n)$ holds. The inclusion $\mathcal{S}(\lambda_1, \lambda_2, \dots, \lambda_n) \subset \{ \mu \in \mathcal{D}^{\star}(G) : P^{\mu}=P^{\lambda_r} \}$ is obvious, whereas the reverse containment is not true by the following proposition: 

 \begin{proposition}\label{prop-exhaustion}
\textit{Let $\{ \lambda_1, \lambda_2, \dots , \lambda_n\}$ be a $\Z$-linearly independent subset of $\mathcal{D}(G)$ with at least one of the $\lambda_r$'s belonging to $\mathcal{D}^{\star}(G)$ and $P^{\lambda_r}=P^{\lambda_s}$ for all $1 \leq r,s \leq n$. Then there exist infinitely many $\mu \in \mathcal{D}^{\star}(G)$ with $P^{\mu}=P^{\lambda_r}$ such that $\mu \notin \mathcal{S}(\lambda_1, \lambda_2, \dots, \lambda_n)$.} 
\end{proposition}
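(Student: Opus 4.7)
The plan is to construct an explicit infinite family $\{\mu_k\}_{k \in \N}$ of Harish-Chandra parameters lying in $\mathcal{D}^{\star}(G)$, all sharing the common positive system $P^{\lambda_1}$, yet each escaping the $\N$-cone $\mathcal{S}(\lambda_1,\dots,\lambda_n)$. The underlying principle is that $\mathcal{S}$ is merely the non-negative integer cone on the $\lambda_i$'s, so $\Z$-linear independence guarantees that any integer combination of the $\lambda_i$ containing a strictly negative coefficient must fall outside $\mathcal{S}$; on the other hand, adding a sufficiently large positive multiple of $\lambda_1$ preserves both the positive system and the sufficient-regularity condition defining $\mathcal{D}^{\star}(G)$. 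The simply-connectedness of $G_{\C}$ ensures $\delta$ lies in the weight lattice $P$, hence $\mathcal{F} = \delta + P = P$ is closed under $\Z$-linear combinations, so integrality comes for free.

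Without loss of generality I would take $\lambda_1 \in \mathcal{D}^{\star}(G)$ and, in the main case $n \geq 2$, set $\mu_k := k\lambda_1 - \lambda_2$ for $k \in \N$. Four properties then need to be verified for $k$ sufficiently large. Integrality $\mu_k \in \mathcal{F}$ is immediate from the lattice remark. For the chamber, I compute $\langle \mu_k, \alpha \rangle = k\langle \lambda_1, \alpha \rangle - \langle \lambda_2, \alpha \rangle$; since the hypothesis $P^{\lambda_1} = P^{\lambda_2}$ forces $\langle \lambda_1, \alpha \rangle$ and $\langle \lambda_2, \alpha \rangle$ to share signs on every root, the dominant $k\lambda_1$-term gives $P^{\mu_k} = P^{\lambda_1}$ once $k$ is large. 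For the $\mathcal{D}^{\star}$ condition, since $P^{\mu_k} = P^{\lambda_1}$ yields $\delta^{\mu_k} = \delta^{\lambda_1}$, I rewrite $\langle \mu_k - \delta^{\lambda_1}, \beta \rangle = (k-1)\langle \lambda_1, \beta \rangle + \langle \lambda_1 - \delta^{\lambda_1}, \beta \rangle - \langle \lambda_2, \beta \rangle$ for $\beta \in P^{\lambda_1} \cap \Phi_n$: the first summand grows linearly in $k$ while the rest is fixed, so positivity holds for large $k$. Finally, if $k\lambda_1 - \lambda_2 = \sum_i a_i \lambda_i$ with $a_i \in \N \cup \{0\}$, then $\Z$-linear independence of $\{\lambda_1,\dots,\lambda_n\}$ forces $a_1 = k$, $a_2 = -1$, and $a_i = 0$ for $i \geq 3$, contradicting $a_2 \geq 0$; so $\mu_k \notin \mathcal{S}(\lambda_1,\dots,\lambda_n)$.

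For the auxiliary case $n = 1$, the same template applies provided $\dim \mathfrak{t} \geq 2$: I would pick any integral $\nu \in \mathcal{F}$ with $P^\nu = P^{\lambda_1}$ and $\nu \notin \Q \lambda_1$ (such $\nu$ exists because the chamber is an open cone of dimension $\geq 2$ while $\Q \lambda_1$ is one-dimensional), and set $\mu_k := \nu + k\lambda_1$. Then $\mu_k$ is integral, lies in the correct chamber and in $\mathcal{D}^{\star}(G)$ for large $k$ by the same asymptotic dominance argument, and cannot be a non-negative integer multiple of $\lambda_1$ since $\mu_k \notin \Q \lambda_1$. The corner case $\dim \mathfrak{t} = 1$ is already subsumed by Remark \ref{remsl2r}, which shows that only finitely many exceptions can occur there, so the proposition is substantive precisely in the higher-rank regime.

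The bulk of the verification is routine asymptotic bookkeeping: selecting a single threshold $k_0$ for which the chamber equality, regularity, and $\mathcal{D}^{\star}$-inequality all hold uniformly over the finite root system. The conceptually interesting point, and what makes the proposition non-trivial even in the extremal case $n = \dim \mathfrak{t}$, is that although the $\lambda_i$ may then generate a full-rank sublattice of $\mathcal{F}$, the set $\mathcal{S}$ still only covers a single ``positive orthant'' within that sublattice, so $k \lambda_1 - \lambda_2$ escapes $\mathcal{S}$ for purely algebraic reasons independent of the ambient rank.
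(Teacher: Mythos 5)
Your proposal is correct and follows essentially the same route as the paper: both take $\mu = k\lambda_1 - \lambda_2$ for large $k$, check that the dominant $k\lambda_1$ term forces $P^{\mu}=P^{\lambda_1}$, split $\langle \mu - \delta^{\mu},\beta\rangle$ into the positive term $\langle\lambda_1-\delta^{\lambda_1},\beta\rangle$ plus a term that is positive for large $k$, and invoke $\Z$-linear independence to exclude $\mu$ from the cone. Your extra remarks on integrality and on the $n=1$ case are sensible additions but do not change the argument, which the paper carries out only for $n\geq 2$.
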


\begin{proof}

Without loss of generality, let us assume that $\lambda_1 \in \mathcal{D^*}(G)$. We show that $m \lambda_1 - \lambda_2 \in \mathcal{D^*}(G)$ for sufficiently large positive integer $m$. Clearly $\Phi = P^{\lambda_1} \cup P^{-\lambda_1} = P^{\lambda_2} \cup P^{-\lambda_2}$. For any $\alpha \in P^{\lambda_1}=P^{\lambda_2}$, we have $\langle \lambda_1 , \alpha \rangle > 0$ and $\langle \lambda_2 , \alpha \rangle > 0$. So, there exists $k \in \N$ such that for each $\alpha \in P^{\lambda_1}=P^{\lambda_2}$, $m \langle \lambda_1 , \alpha \rangle > \langle \lambda_2 , \alpha \rangle$ for all $m \geq k$. Equivalently, for every $\alpha \in P^{-\lambda_1} =P^{-\lambda_2}$, $m \langle \lambda_1 , \alpha \rangle < \langle \lambda_2 , \alpha \rangle$. In other words, if $\alpha \in \Phi$ then for every $m \geq k$, we get $\langle m \lambda_1 - \lambda_2, \alpha \rangle > 0$ if and only if $\alpha \in P^{\lambda_1}=P^{\lambda_2}$. Therefore, $P^{m \lambda_1 - \lambda_2}=P^{\lambda_1}=P^{\lambda_2}$. For a fixed $m > k$, let us choose $\alpha \in P^{m \lambda_1 - \lambda_2} \cap \Phi_n$. Then we have $\langle m \lambda_1 - \lambda_2 - \delta^{m \lambda_1 - \lambda_2} , \alpha \rangle = \langle m \lambda_1 - \delta^{\lambda_1}, \alpha \rangle - \langle \lambda_2 , \alpha \rangle = \langle \lambda_1 - \delta^{\lambda_1} , \alpha \rangle + \langle (m-1) \lambda_1 - \lambda_2 , \alpha \rangle$. Now the first summand is positive because $\lambda_1 \in \mathcal{D^*}(G)$, and the second summand is positive by the choice of $m$. Therefore, $m \lambda_1 - \lambda_2 \in \mathcal{D^*}(G)$ for all $m >k$. Finally, by the $\Z$-linear independence of $\lambda_i$'s, the linear form $\mu:= m \lambda_1 - \lambda_2 \notin \mathcal{S}(\lambda_1, \lambda_2, \dots, \lambda_n)$. Hence, the proposition follows. 
\end{proof}

\begin{remark}
\textit{From the above proof it can be seen easily that for any $1 < i \leq n$, the linear form $m \lambda_1 - \lambda_i$ with sufficiently large $m$   provides an infinite collection of such $\mu$ which determines the same positive system but does not belong to the multi-string. The Prop. \ref{prop-exhaustion} also holds for any arbitrary finite set of $n$ linear forms in which any subset of $n-1$ linear forms is $\Z$-linearly independent.}
\end{remark}

\section{Dimensions of the space of Cusp forms}\label{sec-5}

In this section, we showcase an application of Thm. \ref{thm1} to the dimension of the space of cusp forms. We need to write the explicit dictionary between the dimension of the space of cusp forms and discrete series multiplicities. 

Let $B$ be a division quaternion algebra over a totally real number field $F$ of degree $n$. Assume that $B$ splits at exactly one real place $v$ of $F$ i.e. $B_v \cong \mathrm{M}(2, \R)$. Let $|.|$ be the standard norm on $B$. Let $B^1 = \{ x \in B :  |x|=1 \}$. Then $B^1_v \cong \mathrm{SL}(2, \R)$. Consider a maximal order $\mathcal{O}$ in $B$. Let $\mathcal{O}^1 = \{ x \in \mathcal{O} : |x| =1 \}$.  Let $\mathcal{H}$ denote the Hamiltonian algebra over $\R$. We have the map 
$$ \iota : B^1 \longrightarrow B_{\R}^1 \cong (\mathrm{SL}(2, \R) \otimes \mathcal{H}^{n-1}) \longrightarrow \mathrm{SL}(2, \R),$$
where the first map is the usual inclusion and the second map is the projection onto the first component. 
Let $\Gamma$ be the image of $\mathcal{O}^1$ under the map $\iota$ in $\mathrm{SL}(2, \R)$. Then $\Gamma$ is a uniform lattice in $\mathrm{SL}(2, \R)$. In fact, every uniform lattice in $\mathrm{SL}(2, \R)$ arises in this way (see \cite{AW}). Let us recall the definition of cusp forms. 

\begin{definition}
\textit{Let $\Gamma \subset \mathrm{SL}(2, \R)$ be a uniform lattice and fix an integer $k$. A holomorphic function $f : \mathbb{H} \longrightarrow \C$ is called a cusp form of weight $k$ and level $\Gamma$ if} 

1. \textit{$f\left(\dfrac{az+b}{cz+d}\right)=(cz+d)^k f(z) \hspace{10pt}\ \text{for all}\  \begin{pmatrix}
a & b\\
c & d
\end{pmatrix} \in \Gamma,~ z \in \mathbb{H}$};

2. \textit{$\int\limits_{\Gamma \backslash \mathbb{H}} | f(z) |^2 y^k \dfrac{dx dy}{y^2} < \infty.$}
\end{definition}

\noindent The space of cusp forms of weight $k$ for a uniform lattice $\Gamma \subset \mathrm{SL}(2, \R)$ is denoted by $\mathcal{S}_k(\Gamma)$. For each $f \in \mathcal{S}_k(\Gamma)$, we define $\phi_f : \mathrm{SL}(2, \R) \longrightarrow \C$ by 
$$ \phi_f (g) = (ci + d)^{-k} f\left(\dfrac{a i + b}{c i +d}\right)\ \text{for all}\ g=\begin{pmatrix}
a & b\\
c & d
\end{pmatrix} \in \mathrm{SL}(2, \R),$$
where $i= \sqrt{-1}$. 
It can be seen easily that $\phi_f (\gamma g)= \phi_f (g)$ for all $ \gamma \in \Gamma$ and $g \in \mathrm{SL}(2, \R)$. Also, for any $s_{\theta}:=\begin{pmatrix}
\cos \theta & -\sin \theta\\
\sin \theta & \cos \theta
\end{pmatrix} \in \mathrm{SO}(2)$ and $g \in \mathrm{SL}(2, \R)$, $\phi_f (g s_{\theta})= e^{-i k \theta} \phi_f (g)$. Moreover, $\phi_f \in L^2(\Gamma \backslash \mathrm{SL}(2, \R))$. Therefore, we have the following linear map:
$$ T : \mathcal{S}_k(\Gamma) \longrightarrow L^2(\Gamma \backslash \mathrm{SL}(2, \R))\ \text{defined by}\ T(f)=\phi_f.$$

Let $\mathfrak{g}=\mathfrak{sl}(2, \C)$ be the complexified Lie algebra of  $\mathrm{SL}(2, \R)$. The Cartan subalgebra $\mathfrak{t}$ equals $\C X$, where $X=\begin{pmatrix}
    0 & -1\\
    1 & 0
\end{pmatrix}$. The set $\Phi$ of roots is $\{\pm \alpha\}$, where $\alpha$ is defined by $\alpha(X)=2i$. We fix the positive system $\Phi^+=\{\alpha \}$.

\vspace{5pt}

\noindent Any $\lambda \in \mathcal{D}(\mathrm{SL}(2, \R))$ is of the form $\lambda=\tfrac{k-1}{2}\alpha$ for some $k \in \Z$ with $|k| \geq 2$. Let $\pi_k$ be the discrete series representation associated to the Harish-Chandra parameter $\lambda=\tfrac{k-1}{2}$. If $k \geq -2$, then $\pi_k$ is called a holomorphic discrete series representation of $\mathrm{SL}(2, \R)$.

\begin{proposition}\label{prop-dic}
\textit{Let $\Gamma$ be a uniform lattice in $\mathrm{SL}(2, \R)$. Then $\mathcal{S}_k(\Gamma) \cong \Hom_G(\pi_{-k} , L^2(\Gamma \backslash \mathrm{SL}(2, \R) ))$ for all $k \geq 2$.}
\end{proposition}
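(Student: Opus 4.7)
The plan is to construct mutually inverse maps between $\mathcal{S}_k(\Gamma)$ and $\Hom_G(\pi_{-k}, L^2(\Gamma \backslash \mathrm{SL}(2,\R)))$, with the map $T$ from the excerpt as one direction. First I would recall the concrete model of the discrete series $\pi_{-k}$ for $k \geq 2$: as a unitary $(\mathfrak{g},K)$-module, it has a unique-up-to-scalar highest weight vector $v_0$ spanning a one-dimensional $K$-isotypic subspace of weight $-k$ (i.e.\ $s_\theta \cdot v_0 = e^{-ik\theta} v_0$), and $v_0$ is annihilated by the non-compact root vector $E^+ \in \mathfrak{p}_{\C}$ corresponding to $\alpha$. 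Uniqueness of irreducible unitary representations with such a highest-weight vector is the standard classification fact that I will use repeatedly.

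Next I would show that $T$ lands in the right space. Given $f \in \mathcal{S}_k(\Gamma)$, the excerpt already verifies $\phi_f \in L^2(\Gamma \backslash \mathrm{SL}(2,\R))$, that $\phi_f$ is $\Gamma$-invariant, and that $\phi_f(g s_\theta) = e^{-ik\theta}\phi_f(g)$, so $\phi_f$ has $K$-weight $-k$. The key remaining computation is to use the Iwasawa decomposition of $\mathfrak{g}$ at a point $g$ with $g\cdot i = z$, and translate the Cauchy--Riemann equation $\partial_{\bar z} f = 0$ into the statement $(E^+ \cdot \phi_f)(g) = 0$ for every $g$. Together, these three properties imply that $\phi_f$ generates, under right translation, a closed $G$-invariant subspace $W_f \subset L^2(\Gamma \backslash \mathrm{SL}(2,\R))$ which is unitarily equivalent to $\pi_{-k}$: indeed the cyclic $(\mathfrak{g},K)$-module generated by $\phi_f$ is irreducible with a highest weight vector of the correct type, so by the classification it must be $\pi_{-k}$. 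Normalizing by $v_0 \mapsto \phi_f$ gives a canonical $G$-equivariant embedding $\Psi(f): \pi_{-k} \hookrightarrow L^2(\Gamma \backslash \mathrm{SL}(2,\R))$, and linearity and injectivity of $\Psi$ are immediate since $\Psi(f)(v_0) = \phi_f$ and $T$ is injective.

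For surjectivity, given $\Theta \in \Hom_G(\pi_{-k}, L^2(\Gamma \backslash \mathrm{SL}(2,\R)))$, I would set $\phi := \Theta(v_0)$ and reverse the construction. Since $\Theta$ is $G$-equivariant, $\phi$ is $\Gamma$-invariant, lies in the $-k$ weight space for $K$, and satisfies $E^+ \cdot \phi = 0$ (the last by $G$-equivariance applied to $E^+ v_0 = 0$ in the smooth/analytic vectors, using the fact that $\phi$ is automatically smooth since it lies in the $K$-finite part of $L^2$). Define $f: \mathbb{H} \to \C$ by $f(z) := (ci+d)^k \phi(g)$ for any $g = \left(\begin{smallmatrix} a & b \\ c & d \end{smallmatrix}\right)$ with $g \cdot i = z$; the $K$-equivariance $\phi(gs_\theta) = e^{-ik\theta}\phi(g)$ makes $f$ independent of the choice of lift. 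The modular transformation law for $f$ then follows from the $\Gamma$-invariance of $\phi$, the holomorphy of $f$ from $E^+\phi = 0$, and the $L^2$-condition for $f$ on $\Gamma \backslash \mathbb{H}$ with the measure $y^k\,\tfrac{dx\,dy}{y^2}$ from a change-of-variables identity relating $\|\phi\|_{L^2(\Gamma\backslash G)}^2$ (up to a fixed constant coming from the Haar measure on $K$) to $\int_{\Gamma \backslash \mathbb{H}} |f(z)|^2 y^{k-2}\,dx\,dy$. By construction $\phi_f = \phi$, so $\Psi(f) = \Theta$.

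The main obstacle, and where the bookkeeping must be done carefully, is the translation between the complex-analytic condition on $f$ and the Lie-algebraic annihilation condition on $\phi_f$, together with pinning down the correct sign convention that singles out $\pi_{-k}$ (rather than $\pi_k$) as the representation generated by $\phi_f$ when $k \geq 2$. Once the root vector $E^+ \in \mathfrak{p}_{\C}$ annihilating $\phi_f$ is identified unambiguously, everything else is a matter of unwinding definitions and invoking the uniqueness of the discrete series with a given highest $K$-weight.
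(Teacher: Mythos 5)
Your proposal is correct and follows essentially the same route as the paper: both identify $\phi_f$ as a vector of $K$-weight $-k$ annihilated by the appropriate non-compact root vector, hence generating a copy of the holomorphic discrete series $\pi_{-k}$, and exploit the one-dimensionality of that minimal $K$-type to turn $T$ into the asserted isomorphism. The paper delegates the substance (including the surjectivity direction, which you carry out explicitly by recovering $f$ from $\Theta(v_0)$) to the phrase ``it is well known'' together with a citation of Gelbart's Theorem 2.10, so your write-up is a more detailed rendering of the same argument rather than a different one.
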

\begin{proof}
For any $k \in \Z$, let $\tau_k$ be the character of $\mathrm{SO}(2)$ defined by $\tau_k(s_{\theta})=s_{\theta}^k$. It is well known that for every $f \in \mathcal{S}_k(\Gamma)$, the image $T(f)=\phi_f$ in $L^2(\Gamma \backslash \mathrm{SL}(2, \R))$ generates an irreducible subrepresentation of $L^2(\Gamma \backslash \mathrm{SL}(2, \R))$, which is isomorphic to the holomorphic discrete series $\pi_{-k}$ of $\mathrm{SL}(2, \R)$. Under the action of $\mathrm{SO}(2)$, $\phi_f$ transforms via the character $\tau_{-k}$. Since $\tau_{-k}$ is the minimal $\mathrm{SO}(2)$-type of the holomorphic discrete series representation $\pi_{-k}$ of $\mathrm{SL}(2, \R)$, $\phi_f$ belongs to the one-dimensional $\tau_{-k}$-isotypic component of this copy of $\pi_{-k}$ in $L^2(\Gamma \backslash \mathrm{SL}(2, \R))$. Therefore, if $f_1$ and $f_2$ are two linearly independent cusp forms in $\mathcal{S}_k(\Gamma)$ then the irreducible subrepresentations $\langle \phi_{f_1} \rangle$ and $\langle \phi_{f_2} \rangle$ in $L^2(\Gamma \backslash \mathrm{SL}(2, \R))$ generated by $\phi_{f_1}$ and $\phi_{f_2}$ respectively, are linearly independent; in fact, those are orthogonal to each other. In other words, the linear map $T$ is injective and it induces the isomorphism between $\mathcal{S}_k(\Gamma)$ and $\Hom_G(\pi_{-k} , L^2(\Gamma \backslash \mathrm{SL}(2, \R) ))$.     
\end{proof}

\begin{remark}
   \textit{The reader is referred to Thm. 2.10 of \cite{SG} for a more detailed proof.}
\end{remark}

\noindent In particular, from Prop. \ref{prop-dic} we have $\text{dim}\ \mathcal{S}_k(\Gamma) = m(\pi_{-k} , \Gamma)$, where $m(\pi_{-k} , \Gamma)$ is the multiplicity of $\pi_{-k}$ in $  L^2(\Gamma \backslash \mathrm{SL}(2, \R))$. Using this, we deduce the following consequence of Thm. \ref{thm1}. 

\begin{corollary}\label{cor-dim}
\textit{Let $\Gamma$ be a uniform lattice in $\mathrm{SL}(2, \R)$, and let $N_{\Gamma}$ be the maximal order of elliptic elements in $\Gamma$. Let $\mathcal{A}$ be a finite subset of $\N$, which satisfies $| \mathcal{A} \cap (j + N_{\Gamma} \Z) | \geq 2$ for all $0 \leq j \leq N_{\Gamma}-1$. Then for any $\ell \geq 2$, $\text{dim}\ \mathcal{S}_{\ell}(\Gamma)=\sum\limits_{k} n(\ell, k)\ \text{dim}\ \mathcal{S}_k(\Gamma)$, where the summation runs over the set $\{k \geq 2  :  \tfrac{k}{2} \in \mathcal{A}\ \text{or}\ \tfrac{k+1}{2} \in \mathcal{A}\}$ and $n(\ell, k)$ are integers which are independent of $\Gamma$.}
\end{corollary}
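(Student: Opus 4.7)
The plan is to convert Cor.~\ref{cor-dim} into a statement about discrete series multiplicities using Prop.~\ref{prop-dic} and then apply Thm.~\ref{thm1}. Prop.~\ref{prop-dic} gives $\dim \mathcal{S}_k(\Gamma) = m(\pi_{-k}, \Gamma)$ for every $k \geq 2$, and since $G = \mathrm{SL}(2, \R)$ has $|\Phi^+| = 1$, the residue-class hypothesis $|\mathcal{A} \cap (j + N_{\Gamma} \Z)| \geq 2$ of Cor.~\ref{cor-dim} matches exactly the condition $\geq |\Phi^+|+1$ required in Thm.~\ref{thm1}.

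First I would record that the Harish-Chandra parameter of $\pi_{-k}$ is $\lambda^{(k)} = -\tfrac{k+1}{2}\alpha$ with positive system $P^{\lambda^{(k)}} = \{-\alpha\}$, and a direct computation ($\lambda^{(k)} - \delta^{\lambda^{(k)}} = -\tfrac{k}{2}\alpha$) places every $\lambda^{(k)}$ in $\mathcal{D}^{\star}(G)$. Since no single Harish-Chandra direction in $\mathcal{D}(G)$ steps through these parameters by $\tfrac{1}{2}\alpha$, I would work with two strings sharing the direction $-\alpha$: the \emph{even} string $\mathcal{S}(-\tfrac{3}{2}\alpha, -\alpha)$ whose $j$-th element corresponds to weight $k = 2(j+1)$, and the \emph{odd} string $\mathcal{S}(-2\alpha, -\alpha)$ whose $j$-th element corresponds to weight $k = 2j+3$. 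A quick check places both bases and the common direction in $\mathcal{D}^{\star}(G)$, so Lem.~\ref{lemma2.4.3} makes Thm.~\ref{thm1} directly available for each string. I would then transfer $\mathcal{A}$ to string-index sets $\mathcal{A}_{\mathrm{even}} := \{m - 1 : m \in \mathcal{A}\}$ and $\mathcal{A}_{\mathrm{odd}} := \{m - 2 : m \in \mathcal{A},\, m \geq 2\}$, using the fact that an integer translation is a bijection on residue classes mod $N_{\Gamma}$ to inherit the cardinality hypothesis from $\mathcal{A}$ onto $\mathcal{A}_{\mathrm{even}}$ and $\mathcal{A}_{\mathrm{odd}}$.

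Applying Thm.~\ref{thm1} to the even and odd strings separately would then yield, for each even $\ell \geq 2$, an integer linear expression
$$
m(\pi_{-\ell}, \Gamma) = \sum_{m \in \mathcal{A}} n_{\mathrm{even}}(\ell, m)\, m(\pi_{-2m}, \Gamma),
$$
and for each odd $\ell \geq 3$ an analogous relation summed over $\{m \in \mathcal{A} : m \geq 2\}$ involving $m(\pi_{-(2m-1)}, \Gamma)$, with all coefficients integers independent of $\Gamma$ by Thm.~\ref{thm1}. Translating back through Prop.~\ref{prop-dic} produces the desired expression of $\dim \mathcal{S}_{\ell}(\Gamma)$ over the summation set $\{k \geq 2 : k/2 \in \mathcal{A}\text{ or }(k+1)/2 \in \mathcal{A}\}$, which is exactly the union of weights arising from the even and odd strings. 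The main point requiring care is the residue-class verification for $\mathcal{A}_{\mathrm{odd}}$ in the boundary case $1 \in \mathcal{A}$; once that routine check is in place, the rest is the two-string bookkeeping on top of Thm.~\ref{thm1}.
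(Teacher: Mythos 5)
Your proposal is correct and follows the paper's own (much terser) proof exactly: the paper likewise applies Prop.~\ref{prop-dic} to replace $\dim\mathcal{S}_k(\Gamma)$ by $m(\pi_{-k},\Gamma)$, splits the Harish--Chandra parameters $\{-\tfrac{k+1}{2}\alpha : k\ge 2\}$ into the same even-weight and odd-weight strings with common direction $-\alpha\in\mathcal{D}^{\star}(\mathrm{SL}(2,\R))$, and invokes Thm.~\ref{thm1} with $|\Phi^+|+1=2$. The index-shift bookkeeping you carry out, and in particular the boundary case $1\in\mathcal{A}$ for the odd string (where $\mathcal{A}_{\mathrm{odd}}$ loses an element and the residue-class hypothesis is not automatic), is passed over in silence in the paper's four-line proof, so your write-up is if anything more careful than the original.
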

\begin{proof}
    By Prop. \ref{prop-dic}, the set $\{ \text{dim}\ \mathcal{S}_k(\Gamma) : k \geq 2\}$ equals to the set $\{ m(\pi_{-k}, \Gamma) : k \geq 2 \}$. The corresponding set of Harish-Chandra parameters is $\{ - \tfrac{k+1}{2}\alpha : k \geq 2\}$. Note that $\{ - \tfrac{k+1}{2}\alpha : k \geq 2\}= \{ -\frac{\alpha}{2} + m (-\alpha) : m \geq 1\} \cup \{ n (-\alpha): n \geq 2\}$. Moreover, it is easy to check that $-\alpha \in \mathcal{D^\star}(\mathrm{SL}(2, \R))$. Therefore, the corollary follows from Thm. \ref{thm1}.  
\end{proof}

\end{document}